\newcommand{\Sy}{\mathrm{Sym}(\Omega)}
\newcommand{\autd}{\mathrm{Aut}(T_\Omega)}
\newcommand{\aut}{\mathrm{Aut}(T)}
\newcommand{\sg}{\sigma(g,v)}
\newcommand{\Pg}{\mathrm{Pw}(G)}
\title{$C^\ast$-simplicity and the amenable radical}
\author{Adrien Le Boudec}
\address{Laboratoire de Math\'{e}matiques, Universit\'{e} Paris-Sud 11, 91405 Orsay, France}
\address{UCLouvain, IRMP, Chemin du Cyclotron 2, 1348 Louvain-la-Neuve, Belgium}
\email{adrien.leboudec@uclouvain.be}
\date{\today}
\theoremstyle{plain}
\newtheorem{thm}{Theorem}[section]
\newtheorem{prop}[thm]{Proposition}
\newtheorem{cor}[thm]{Corollary}
\newtheorem{lem}[thm]{Lemma}
\newtheorem{thmintro}{Theorem}
\theoremstyle{definition}
\newtheorem{defi}[thm]{Definition}
\newtheorem{rmq}[thm]{Remark}
\begin{document}

\maketitle

\begin{abstract}
A countable group is $C^\ast$-simple if its reduced $C^\ast$-algebra is simple. It is well-known that $C^\ast$-simplicity implies that the amenable radical of the group must be trivial. We show that the converse does not hold by constructing explicit counter-examples. We additionally prove that every countable group embeds into a countable group with trivial amenable radical and that is not $C^\ast$-simple. 
\end{abstract}

\section{Introduction}

If $G$ is a discrete countable group, the reduced $C^\ast$-algebra of $G$ is the operator norm closure of the group algebra $\mathbf{C}[G]$ acting by left-regular representation on the Hilbert space $\ell^2(G)$. We say that $G$ is $C^\ast$-simple if its reduced $C^\ast$-algebra has no non-trivial two-sided ideal. This is equivalent to saying that any unitary representation of $G$ that is weakly contained in the regular representation $\lambda_G$, is actually weakly equivalent to $\lambda_G$. For a proof of this equivalence and complements about $C^\ast$-simplicity, we refer the reader to \cite{dlH-survey}.

\medskip

The study of the class of $C^\ast$-simple groups has been of central interest since the proof of $C^\ast$-simplicity of the free group of rank two \footnote{We learned from Pierre de la Harpe that, although the article \cite{Pow} was published in 1975, the proof of the $C^\ast$-simplicity of $\mathbb{F}_2$ was actually obtained by Powers in 1968.} \cite{Pow}. $C^\ast$-simplicity has been extensively generalized to many classes of groups, among other non-trivial free products \cite{Pas-Sal} (see also \cite{Ake-Lee,Bed-disgr,dlH-P}), Gromov-hyperbolic groups \cite{Har-88} and relatively hyperbolic groups \cite{A-M}, lattices in semi-simple connected Lie groups \cite{B-C-dlH}, or centerless mapping class groups and outer automorphism groups of free groups \cite{Br-dlH}. It was proved in \cite{DGO} that the methods from \cite{Ake-Lee} actually apply to any acylindrically hyperbolic group (therefore unifying the aforementioned results from \cite{Pas-Sal,Har-88,A-M,Br-dlH}). More recently, $C^\ast$-simplicity has also been obtained for free Burnside groups of large odd exponent \cite{O-O} and some Tarski monsters \cite{KK,BKKO}.

\medskip

It has been known for a long time that the existence of a non-trivial amenable normal subgroup is an obstruction to $C^\ast$-simplicity \cite{Pas-Sal}. Therefore if $G$ is $C^\ast$-simple, then the amenable radical of $G$, i.e.\ the largest amenable normal subgroup of $G$, must be trivial. It was considered as a major problem to determine whether triviality of the amenable radical is always equivalent to $C^\ast$-simplicity. This problem is for instance discussed in \cite{B-dlH} and in the survey \cite{dlH-survey}[Question 4]. It has been answered positively for the class of linear groups \cite{Poz,BKKO}, and the recent work \cite{BKKO} gives a positive answer as well for the class of groups having only countably many amenable subgroups. The main purpose of this paper is to show that triviality of the amenable radical does not imply $C^\ast$-simplicity in general, therefore answering a long-standing open question. 

\medskip

Problems related to $C^\ast$-simplicity recently experienced major advances. A powerful dynamical approach has been initiated in \cite{KK}. This strategy has been further elaborated in \cite{BKKO}, providing new proofs of $C^\ast$-simplicity of many already known examples. In the realm of locally compact groups, the first construction of non-discrete $C^\ast$-simple groups has recently been carried out in \cite{SvR}.

\medskip

The work \cite{BKKO} also settled the long-standing open problem of characterizing discrete countable groups with the unique trace property (a definition of which can be found in \cite{dlH-survey}), as those for which the amenable radical is trivial. In particular $C^\ast$-simplicity implies the unique trace property. Combined with \cite{BKKO}, our result implies that the converse does not hold, thus filling the last gap in determining the implications between $C^\ast$-simplicity, unique trace property and triviality of the amenable radical. 

\medskip

One of the main results of \cite{KK} characterizes $C^\ast$-simple groups as those having a topologically free boundary action. A partial converse has been proved in \cite{BKKO}, namely that for $C^\ast$-simple groups, \textit{any} boundary action with amenable stabilizers must be topologically free. This last result is an essential argument in our first proof of Theorem \ref{thm-general-c*-intro} below. Our second proof relies on a striking argument from \cite{H-O}, which provides some unitary representation of the group which cannot weakly contain the left-regular representation. 

\section{Results}

The groups for which we will show the non-simplicity of the reduced $C^\ast$-algebra are defined in terms of an action on a tree. In all the paper, $T$ will denote a (not necessarily locally finite) tree. We refer to the beginning of Section \ref{sec-general-result} for the definitions of the terms appearing in the next theorem.

\begin{thmintro} \label{thm-general-c*-intro}
Let $G \leq \aut$ be a countable group, whose action on $T$ is minimal and of general type. Assume that fixators of half-trees in $G$ are non-trivial, and that there is some $\xi \in \partial T$ such that $G_{\xi}$ is amenable. Then $G$ has trivial amenable radical and is not $C^\ast$-simple. 
\end{thmintro}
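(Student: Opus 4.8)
The plan is to prove the two conclusions separately: triviality of the amenable radical is a soft consequence of the dynamics of $G$ on $T$, while non-$C^\ast$-simplicity is obtained by exhibiting a boundary action of $G$ whose stabilizer uniform recurrent subgroup is non-trivial and amenable, and then invoking the results of \cite{KK,BKKO}.

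\emph{Triviality of the amenable radical.} Let $R$ be the amenable radical of $G$; it is a normal amenable subgroup. Being amenable, $R$ either fixes a point of $T$, fixes an end, or stabilises a bi-infinite geodesic (equivalently a pair of ends), by the Tits alternative for actions on trees. Since $R \trianglelefteq G$, the fixed-point set $\mathrm{Fix}_T(R)$, the fixed-end set $\mathrm{Fix}_{\partial T}(R)$, and the $G$-orbit of a stabilised line, are all $G$-invariant. If $\mathrm{Fix}_T(R)\neq\emptyset$ it is a non-empty $G$-invariant subtree, hence equals $T$ by minimality, so $R=1$ because $G\leq\aut$ acts faithfully (a routine passage to the barycentric subdivision handles inversions). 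If $\mathrm{Fix}_{\partial T}(R)\neq\emptyset$, then it being a single fixed end or a pair of fixed ends would force $G$ to fix an end or a pair of ends, contradicting that the action is of general type; three distinct fixed ends force $R$ to fix their median vertex, reducing to the previous case. Finally $R$ cannot stabilise a line without $G$ doing so, again contradicting general type. Hence $R=1$.

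\emph{The boundary action.} Since $G\curvearrowright T$ is minimal and of general type, the convex hull of the limit set is a non-empty $G$-invariant subtree, hence equals $T$, so the limit set is all of $\partial T$; together with the north–south dynamics of hyperbolic elements this shows that $G\curvearrowright\partial T$ is minimal and strongly proximal, i.e.\ is a boundary action in the sense of Furstenberg. (When $T$ is locally finite $\partial T$ is compact; in the general case one first reduces to a compact $G$-model of $\partial T$, a technical point not dwelt on here.) This action is \emph{not} topologically free: by hypothesis some half-tree $h$ of $T$ has non-trivial pointwise fixator, so picking $1\neq g\in G$ fixing $h$ pointwise, the set $\partial h$ of ends lying on the $h$-side is a non-empty clopen subset of $\partial T$ fixed pointwise by $g$ (the geodesic ray towards any $\eta\in\partial h$ from a vertex of $h$ stays in $h$), so $\mathrm{Fix}_{\partial T}(g)$ has non-empty interior.

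\emph{Controlling the stabilisers and concluding.} Consider the equivariant, upper semicontinuous stabiliser map $\eta\mapsto G_\eta$ on $\partial T$, and let $\mathcal S$ be the stabiliser URS of $G\curvearrowright\partial T$, namely the orbit closure in the Chabauty space of $G_{\eta_0}$ for a point $\eta_0$ of continuity of this map. The hypothesis gives $G_\xi$ amenable for one $\xi\in\partial T$; since $G\xi$ is dense by minimality and amenability is closed under Chabauty limits, the stabiliser of every continuity point is a Chabauty limit of conjugates $\gamma_n G_\xi \gamma_n^{-1}$, hence amenable, and therefore $\mathcal S$ consists of amenable subgroups. On the other hand, choosing the continuity point $\eta_0$ inside the non-empty open set $\partial h$ above (continuity points are dense), we have $g\in G_{\eta_0}$ with $g\neq1$, so $\mathcal S$ is non-trivial. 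Now if $G$ were $C^\ast$-simple, then by the theorem of \cite{BKKO} that for a $C^\ast$-simple group every boundary action with amenable stabilisers is topologically free — equivalently, that a $C^\ast$-simple group has no non-trivial amenable URS \cite{KK} — the action $G\curvearrowright\partial T$ would be topologically free, contradicting the previous paragraph. Hence $G$ is not $C^\ast$-simple. (An alternative route to this last step, which I would present as a second proof, avoids \cite{BKKO} and instead produces, following the argument of \cite{H-O}, a unitary representation of $G$ not weakly containing $\lambda_G$.) The main obstacle is the passage from the single amenable end-stabiliser $G_\xi$ to the amenability of the whole stabiliser URS of $\partial T$, since individual end-stabilisers need not be amenable; this is exactly where the density of $G\xi$ and the Chabauty-closedness of amenability are used, and where the lack of local finiteness of $T$ also has to be circumvented.
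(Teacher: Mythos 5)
Your argument for the triviality of the amenable radical and your overall strategy for non-$C^\ast$-simplicity (a boundary action that is not topologically free but has an amenable point stabilizer, then \cite{BKKO}) are essentially the paper's first proof. There is, however, one genuine gap, and it sits exactly where you wrote \enquote{a technical point not dwelt on here}. The theorem is stated for trees that are \emph{not} assumed locally finite, and the paper's applications (e.g.\ the embedding theorem, via $G(F,F')$ with $\Omega$ infinite) need this generality. For a non-locally-finite tree, $\partial T$ is not compact (for a star with infinitely many rays the end space is an infinite discrete set), so \enquote{$G$-boundary}, strong proximality, the stabilizer URS and the density of continuity points do not apply to $G \curvearrowright \partial T$ as you set things up; moreover there is no compact $G$-space modelling $\partial T$ alone. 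The correct object is obtained by compactifying the tree itself: $T \cup \partial T$ is compact (\cite{Mon-Sha}), and one works with $X = (T \setminus T_f) \cup \partial T$, i.e.\ one must adjoin the vertices of infinite degree to the set of ends. Proving that $X$ is a $G$-boundary is a real step (Proposition \ref{prop-construction-boundary}: minimality uses that every half-tree meets the axis of a hyperbolic element, plus density of $\partial T$ in $X$; strong proximality also has to be checked), and the rest of your argument then has to be re-run on $X$ (the half-tree fixator still fixes a non-empty open subset of $X$, the amenable stabilizer $G_\xi$ with $\xi \in \partial T \subseteq X$ is still available, and $X$ is compact metrizable because a minimal general-type action of a countable group forces $T$ to be countable). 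None of this is supplied by the phrase \enquote{reduce to a compact $G$-model of $\partial T$}; as written, your proof only covers the locally finite case.

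Two smaller comparative remarks. First, your URS detour is correct (in particular, amenability is indeed Chabauty-closed among subgroups of a countable discrete group, since it is a local property), but it is heavier than necessary: the result the paper invokes, \cite[Proposition 2.6]{BKKO}, only requires a single point of the boundary with amenable stabilizer together with failure of topological freeness, so the passage through continuity points and Chabauty limits can be skipped. Second, the alternative you mention in passing is exactly the paper's second proof and would have bypassed the compactness issue entirely: taking $H = G_\xi$ amenable and non-trivial $a, b$ fixing the two opposite half-trees at an edge, $a$ and $b$ have disjoint supports on $G \cdot \xi \simeq G/H$, so $(1-a)(1-b)$ acts as zero on $\ell^2(G/H)$ while $\lambda_{G/H}$ is weakly contained in $\lambda_G$ by amenability of $H$; this argument of \cite{H-O} uses no compact boundary at all.
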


We emphasize that in the above theorem, we do consider $G$ as a discrete group, although a group satisfying these assumptions \textit{cannot} be a discrete subgroup of the topological group $\aut$. More generally, unless specified otherwise, in this paper all groups are viewed as discrete groups. In particular when referring to amenability, it is always with respect to the discrete topology. 

\subsection*{Piecewise prescribed tree automorphisms}

Our first illustration of Theorem \ref{thm-general-c*-intro} comes from the following construction. Given a subgroup $G \leq \aut$, we introduce the group $\Pg \leq \aut$ of automorphisms of $T$ acting piecewise like $G$ (see Section \ref{sec-Pw} for a formal definition). Constructions of the same flavor had already been considered \textit{at the level of the boundary}: for $\aut$ this gives rise to the notion of almost automorphism of $T$, and for $\mathrm{PSL}(2,\mathbf{R})$ acting on the hyperbolic disc we obtain the group of piecewise projective homeomorphisms of the circle, recently considered in \cite{Monod}. We point out that this construction is different in the sense that $\Pg$ does act on $T$, and not only on its boundary. Note also that this \enquote{inner piecewise-ation} process applied to $\mathrm{PSL}(2,\mathbf{R})$ would only give $\mathrm{PSL}(2,\mathbf{R})$ itself. 

\medskip

The following result shows that, under weak assumptions on the group $G$, the family of groups acting on $T$ piecewise like $G$ provides examples of countable groups that are not $C^\ast$-simple.

\begin{thmintro} \label{thm-pw-c*simple}
Let $G \leq \aut$ be a countable group whose action on $T$ is minimal and of general type, and such that fixators of vertices in $G$ are amenable. Let $\Gamma$ be a subgroup of $\Pg$ that contains $G$ (and therefore has trivial amenable radical). 
\begin{enumerate}[label=(\alph*)] 
\item If fixators of half-trees in $\Gamma$ are non-trivial, then $\Gamma$ is not $C^\ast$-simple.
\item Assume that stabilizers of vertices in $G$ are non-trivial. Then $\Pg$ is not $C^\ast$-simple.
\end{enumerate}
\end{thmintro}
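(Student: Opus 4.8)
The plan is to derive both statements from Theorem~\ref{thm-general-c*-intro}, applied to $\Gamma$ in case~(a) and to $\Pg$ in case~(b). Several hypotheses come for free from $G$: any subgroup $H$ of $\aut$ containing $G$ still acts minimally and of general type (an $H$-invariant subtree is $G$-invariant, and the hyperbolic elements witnessing general type for $G$ lie in $H$), and since this action is faithful the amenable radical of $H$ is contained in the kernel of the action, hence is trivial — this accounts for the parenthetical assertion about $\Gamma$. Also, by minimality $T$ is the convex hull of a single $G$-orbit, hence countable; so there are only countably many half-trees, $\Pg$ is countable, and therefore so is $\Gamma$. It remains, in each case, to exhibit non-trivial fixators of half-trees and an end with amenable stabiliser, and then invoke Theorem~\ref{thm-general-c*-intro}.

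The existence of non-trivial half-tree fixators is an assumption in~(a). In~(b) I would prove the elementary statement that a non-trivial element $g \in G$ fixing a vertex $v$ forces $\Pg$ to contain a non-trivial element fixing a half-tree pointwise. If $g$ fixes every edge at $v$, it acts non-trivially on some half-tree $H_0$ hanging at $v$, and the automorphism equal to $g$ on $H_0$ and to the identity on $T \setminus H_0$ belongs to $\Pg$ (the edge bounding $H_0$ is fixed by $g$, so the two pieces glue), is non-trivial, and fixes $T \setminus H_0$ pointwise. Otherwise $g$ sends some half-tree $H_1$ at $v$ to a disjoint half-tree $gH_1$ at $v$, and the automorphism equal to $g$ on $H_1$, to $g^{-1}$ on $gH_1$, and to the identity elsewhere belongs to $\Pg$, is non-trivial, and fixes pointwise any remaining half-tree at $v$; here one uses a vertex $v$ of degree at least $3$, which exists since a minimal action of general type cannot be on a line. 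The only nuisance is the case of vertices of degree $2$, which I would dispose of by passing to a power of $g$ or to a nearby vertex in the $G$-orbit.

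The main point is the amenability of the stabiliser of a suitable end, and since $\Gamma_\xi \leq \Pg_\xi$ it is enough to treat $\Pg$. The structural fact to exploit is that an element of $\Pg$ is itself an automorphism of $T$ with finitely many pieces, each of which it maps isometrically onto a half-tree; in particular, far along any ray it coincides with a single element of $G$. Fix a ray $(v_0, v_1, \dots)$ towards an end $\xi$. The Busemann cocycle along this ray gives a homomorphism $\Pg_\xi \to \mathbf{Z}$; an element of its kernel fixes $\xi$ with zero translation, hence is elliptic, hence fixes the unique ray from any of its fixed points to $\xi$ and in particular fixes $v_n$ for some $n$. Thus the kernel is the increasing union of the groups $\Pg_{v_n} \cap \Pg_\xi$, and it suffices to show that each vertex stabiliser $\Pg_w$ is amenable. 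For this I would write $\Pg_w$ as the increasing union over $k$ of the subgroups $P_k$ of elements all of whose piece-boundaries lie in the ball $B(w,k)$; each $P_k$ is a subgroup, and beyond the sphere of radius $k$ an element of $P_k$ agrees with a single element of $G$ on each hanging half-tree, which (together with the gluing condition at vertices) forces $P_k$ to be built from the vertex and edge fixators of $G$ inside $B(w,k)$ — all amenable by hypothesis — by finitely many extensions and restricted direct products. Hence $P_k$ is amenable, so $\Pg_w$ and then $\Pg_\xi$ are amenable, and Theorem~\ref{thm-general-c*-intro} applies in both cases.

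I expect the genuine difficulty to be this last step: one must check carefully that an element of $\Pg$ fixing a vertex $w$ is, modulo an amenable ``germ'' controlled by $G_w$, supported on finitely many hanging half-trees on each of which it again behaves like an element of $\Pg$ fixing the root, so that amenability really propagates through the filtration by the $P_k$. The remaining ingredients — countability, inheritance of minimality and general type, the combinatorial construction of half-tree fixators in $\Pg$ (the degree-$2$ nuisance aside), and the Busemann reduction — should be routine.
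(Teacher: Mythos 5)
Your skeleton is the paper's: reduce both parts to Theorem~\ref{thm-general-c*-intro}, manufacture a non-trivial half-tree fixator in $\Pg$ for part~(b) by cutting and pasting an elliptic element of $G$ (your second case is exactly the paper's construction; the degree-two nuisance is real but minor), and deduce amenability of end stabilizers of $\Pg$ from amenability of its vertex stabilizers via the homomorphism to $\mathbf{Z}$ (this is the paper's Lemma~\ref{lem-fixator-amenable}). The gap is precisely where you suspect it: the amenability of the bounded-support subgroups $P_k$ (the paper's $K_n(w)$) is asserted rather than proved, and the one-line structural description you offer in its place is not correct as stated. It is not true that $P_k$ is assembled from ``vertex and edge fixators of $G$ inside $B(w,k)$ by finitely many extensions and restricted direct products'': the quotient of $P_k$ by the fixator of the $1$-ball at $w$ is the local action at $w$, which is not a subgroup of any fixator of $G$ but only lies in $\mathrm{Sym}_0(I)\,\pi(G_w)$, the product of the finitary symmetric group on the (possibly infinite, since $T$ is not locally finite) set $I$ of neighbours of $w$ with a quotient of $G_w$; and the kernel is generated not by fixators of $G$ but by the analogous bounded-support half-tree fixators of $\Pg$ based at the neighbouring vertices, with the radius dropped by one. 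So what is really needed is a nested induction on the radius, carried out simultaneously at all vertices, not a one-shot assembly from subgroups of $G$.

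This is exactly the content of the paper's Proposition~\ref{prop-pw-amenable}: assuming every $K_{n-1}(v)$ is amenable (for \emph{all} vertices $v$), one writes $K_n(v_0)$ as an extension whose kernel is generated by the normal subgroups $F_{n-1}(v_i)=\Pg_{T_i}\cap K_{n-1}(v_i)$ together with $G_{B(v_0,1)}$ --- amenability of this kernel uses the normality of the $F_{n-1}(v_i)$ and Lemma~\ref{lem-P-stable} --- and whose quotient sits inside $\mathrm{Sym}_0(I)\pi(G_{v_0})$, which is amenable because finitary symmetric groups are locally finite and $G_{v_0}$ is amenable by hypothesis. For amenability the finitary factor is harmless, but it has to be identified; your description silently omits it and, more importantly, omits the inductive mechanism that makes the propagation legitimate, which is the heart of the proof of Theorem~\ref{thm-pw-c*simple} (via Corollary~\ref{cor-pg-amenable}). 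Once that proposition is supplied, the rest of your outline --- countability of $\Pg$, inheritance of minimality and general type, the Busemann reduction, and the part~(b) cut-and-paste element --- matches the paper's argument.
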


Considering for $G$ amalgamated products or HNN-extensions acting on their Bass-Serre tree, we thereby obtain a multitude of examples of countable non-$C^\ast$-simple groups with trivial amenable radical. We refer to Section \ref{sec-Pw} for details and examples.

\subsection*{Groups with prescribed local action}

We further illustrate Theorem \ref{thm-general-c*-intro} in the following way. Let $\Omega$ be a (possibly finite) countable set, and let $F \leq F' \leq \Sy$ be permutation groups on $\Omega$. If $T_\Omega$ is a regular tree of degree the cardinality of $\Omega$, we denote by $G(F,F')$ the subgroup of $\autd$ consisting of automorphisms whose local action is prescribed by $F'$ around all vertices, and by $F$ around all but finitely many vertices. We refer to Section \ref{sec-G(F,F')} for a formal definition.

\begin{thmintro} \label{thm-c*-g(f,f')}
Let $\Omega$ be a countable set, and let $F \lneq F' \leq \Sy$ be countable permutation groups such that $F$ acts freely on $\Omega$, $F'$ preserves the orbits of $F$ and has all its point stabilizers amenable. Then the countable group $G(F,F')$ has trivial amenable radical and is not $C^\ast$-simple.
\end{thmintro}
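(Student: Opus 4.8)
The plan is to deduce Theorem \ref{thm-c*-g(f,f')} from Theorem \ref{thm-general-c*-intro} by checking that $G = G(F,F')$, acting on the regular tree $T_\Omega$, satisfies all four hypotheses of that theorem: the action is minimal, it is of general type, fixators of half-trees are non-trivial, and there is a boundary point with amenable stabilizer. First I would recall from Section \ref{sec-G(F,F')} the structure of $G(F,F')$: an element is determined by its local permutations at each vertex, which lie in $F'$ everywhere and in $F$ cofinitely often. Minimality of the action on $T_\Omega$ is essentially automatic because $F'$ (indeed already $F$) is transitive on $\Omega$ — more precisely $F$ acts freely and transitively since a free action of a group on a set of size $|\Omega|$ whose orbits are permuted by $F'$ forces transitivity once one observes $F$ must be a single orbit — so $G(F,F')$ contains enough automorphisms to carry any edge to any other edge; this gives that there is no proper invariant subtree. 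For general type, I would exhibit two hyperbolic elements with no common endpoint on $\partial T_\Omega$; such elements are easy to produce by hand inside $G(F,F')$, e.g.\ translations along two axes that diverge, using that one is free to prescribe the local action as $F$ away from a finite set and the hypothesis $F \lneq F'$ guarantees the group is rich enough (and in particular non-discrete-looking, so not of bounded or lineal type).

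Next I would address the two amenability-flavoured conditions. For the half-tree fixators: the fixator in $G(F,F')$ of a half-tree $T'$ consists of automorphisms that are the identity on $T'$ and act piecewise like elements of $F$ off $T'$; this fixator is non-trivial precisely because $F$ itself is non-trivial (it acts freely on a non-empty set, and if $F$ were trivial the hypothesis $F \lneq F'$ would still let us use $F'$-elements supported off $T'$ — but more cleanly, $F$ freely acting and $F \lneq F'$ forces $|F| \geq 1$ and in fact $F$ nontrivial as soon as $|\Omega|\geq 2$, which holds since $F \lneq F'$ requires at least two points). One takes a vertex $v$ not in $T'$, prescribes a non-trivial permutation from $F'$ (or $F$) at $v$ and the identity elsewhere, and checks this extends to an element of $G(F,F')$ fixing $T'$ pointwise. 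For the boundary stabilizer: fix $\xi \in \partial T_\Omega$ and consider $G_\xi = G(F,F')_\xi$. An element fixing $\xi$ fixes a ray $v_0, v_1, v_2, \dots$ and acts on $T_\Omega$ by, for large $n$, local permutations in $F$ fixing the direction towards $\xi$; since $F$ acts freely, the stabilizer in $F$ of that direction is trivial, so the element eventually acts trivially along the ray, and its behaviour is governed by a decreasing sequence of finitely-supported modifications together with point stabilizers in $F'$, which are amenable by hypothesis. I would make this precise by writing $G_\xi$ as an increasing union of subgroups, each of which is (up to a finite-index or finite-kernel issue) built from finitely many copies of point stabilizers of $F'$ and the trivial group, hence amenable; an increasing union of amenable groups is amenable.

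With all four hypotheses of Theorem \ref{thm-general-c*-intro} verified, that theorem immediately yields that $G(F,F')$ has trivial amenable radical and is not $C^\ast$-simple, which is exactly the assertion. It remains only to note countability of $G(F,F')$, which follows from countability of $\Omega$ together with countability of $F'$: an element is specified by a function from the (countable) vertex set to $F'$ that is cofinitely $F$-valued, and there are only countably many such data that actually define an automorphism.

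The main obstacle I expect is the amenability of $G_\xi$: one must argue carefully that the ``tail'' behaviour along the fixed ray is trivial (this is where freeness of $F$ is essential, preventing an infinite iterated wreath-type product that would fail to be amenable) and then organise the finitely-supported modifications near the ray into an ascending union of groups assembled from point stabilizers of $F'$. The general-type claim is a close second: producing two hyperbolic elements with disjoint pairs of endpoints requires genuinely using that $F' \supsetneq F$ (or at least that $F$ is non-trivial and the tree has degree $\geq 3$) so that $G(F,F')$ is not merely a lamplighter-like group fixing an end or stabilizing a line. Both points are, however, of the kind that can be settled by a direct and elementary construction of explicit automorphisms, so I would not anticipate any deep difficulty, only some bookkeeping.
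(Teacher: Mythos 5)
Your overall plan (check the four hypotheses of Theorem \ref{thm-general-c*-intro} for $G(F,F')$ acting on $T_\Omega$) is the same as the paper's, but the verification of the decisive hypothesis --- non-triviality of fixators of half-trees --- fails as you state it. Prescribing a non-trivial permutation $\sigma$ at a single vertex $v$ and the identity at every other vertex never defines an automorphism of $T_\Omega$ unless $\sigma=1$: for any $g\in\autd$ and any edge $e=(v,w)$ the local permutations at the two endpoints must agree on the color of $e$ (both equal the color of $g(e)$), so if all neighbours of $v$ carry the identity then $\sigma$ fixes every color. Moreover non-triviality of these fixators is \emph{not} a consequence of $F$ being non-trivial: since $F$ acts freely, no non-trivial element of $F$ can occur as the local permutation at the vertex adjacent to the fixed half-tree, because that permutation must fix the color $a$ of the defining edge. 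What is actually needed --- and what the paper does --- is a non-trivial $\sigma\in F'_a$, which exists precisely because $F\lneq F'$ and $F'$ preserves the $F$-orbits (take $\tau\in F'\setminus F$ and correct it by the element of $F$ sending $a$ to $\tau(a)$); one places $\sigma$ at $v$ and propagates into each branch of the opposite half-tree the element $\sigma_b\in F$ with $\sigma_b(b)=\sigma(b)$, whose existence again uses orbit preservation and whose membership in $F$ is what puts the resulting automorphism in $G(F,F')$ rather than merely $U(F')$. Your sketch makes no use of the hypotheses $F\lneq F'$ and orbit preservation at this point, yet they are exactly what the construction requires; incidentally your claim that ``$F$ is nontrivial as soon as $|\Omega|\geq 2$'' is false (the trivial group acts freely), and the true reason $F\neq 1$ here is again orbit preservation combined with $F\lneq F'$.

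Two further gaps. Your route to minimality rests on the assertion that $F$ must act transitively, which is not implied by the hypotheses (a free action need not be transitive); minimality and general type instead follow at once from the containment $U(F)\leq G(F,F')$, since $U(F)$ is vertex-transitive and of general type for \emph{any} $F$ --- this is how the paper argues. For the amenability of $G_\xi$, which you rightly identify as the main obstacle, your sketch overlooks hyperbolic elements fixing $\xi$ (they fix no ray; the paper absorbs them via the trivial-or-$\mathbf{Z}$ quotient in Lemma \ref{lem-fixator-amenable}), and for infinite $\Omega$ the relevant pieces are not ``finitely many copies of point stabilizers'' but restricted direct sums of infinitely many of them, as in the exact sequence of Proposition \ref{prop-ht-amenable}; these are still amenable, but the actual proof is the induction on the groups $K_{n,T}$ fixing half-trees, followed by the edge-independence property and Lemma \ref{lem-fixator-amenable}, none of which is supplied by your outline. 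Similarly, countability of $G(F,F')$ is not obtained by counting cofinitely $F$-valued portraits (there are uncountably many); it uses triviality of edge fixators in $U(F)$, i.e.\ Lemma \ref{lem-fixator-F-freely}, to see that an element is determined by countable data.
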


When $\Omega$ has finite cardinality, the groups $G(F,F')$ from Theorem \ref{thm-c*-g(f,f')} give concrete and uncomplicated examples of non-$C^\ast$-simple groups with trivial amenable radical. They satisfy many additional interesting properties: they are finitely generated (but not finitely presented), they have the Haagerup property, and their asymptotic dimension is equal to one (which is the smallest possible asymptotic dimension for infinite countable groups). When $F$ acts transitively on $\Omega$, the subgroup $G(F,F')^\ast$ of index two in $G(F,F')$ consisting of automorphisms preserving the natural bipartition of vertices of $T_\Omega$, splits as an amalgamated product $A \ast_C B$, where $A \simeq B$ are infinite locally finite groups. Moreover there are natural permutation groups $F \leq F'$ such that $G(F,F')^\ast$ is simple, e.g.\ $F$ generated by a cycle of length $d$ and $F' = \mathrm{Alt}(d)$ for $d \geq 5$ odd. For proofs of these properties and for complements, we refer to \cite{LB-ae}.

When the set $\Omega$ is infinite, this construction is extremely flexible. It allows us to prove the following result.

\begin{thmintro} \label{thm-intro-embedding}
Every countable group $\Gamma$ embeds into a countable group $G$ with trivial amenable radical and that is not $C^\ast$-simple. If moreover $\Gamma$ is finitely generated or torsion free (or both), then so is $G$. 
\end{thmintro}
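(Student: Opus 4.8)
The plan is to realise $\Gamma$ inside a group $G(F,F')$ as in Theorem \ref{thm-c*-g(f,f')} for a suitable pair $F \lneq F'$ of permutation groups, and then to invoke that theorem. The crucial point is that, for \emph{any} permutation group $F \leq \Sy$, the group $G(F,F')$ contains a copy of $F$ all of whose elements fix a common vertex $v_0$ of $T_\Omega$: with respect to a fixed legal colouring, these are the automorphisms fixing $v_0$ and having constant local action (see \cite{LB-ae}). Being vertex-fixing, these elements moreover lie in the index-two subgroup $G \leq G(F,F')$ preserving the natural bipartition of $T_\Omega$. Hence it suffices to produce $F \lneq F' \leq \Sy$ satisfying the hypotheses of Theorem \ref{thm-c*-g(f,f')} together with $\Gamma \leq F$, and then to verify that $G$ inherits the hypotheses of Theorem \ref{thm-general-c*-intro} and the relevant finiteness properties.

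\emph{Choice of $F$ and $F'$.} We may assume $\Gamma$ is non-trivial. Set $F := \Gamma \ast \mathbf{Z}$; this countable group contains $\Gamma$, has trivial centre, and is finitely generated (resp. torsion free) whenever $\Gamma$ is. Let $t$ generate the free factor $\mathbf{Z}$ and put $\phi := \mathrm{Inn}_t \in \mathrm{Aut}(F)$; since $Z(F) = \{1\}$ the automorphism $\phi$ is non-trivial of infinite order. Take $\Omega := F$ as a bare set, let $F$ act on $\Omega$ by left translations, and let $F' \leq \Sy$ be the image of $F \rtimes \langle \phi \rangle \leq F \rtimes \mathrm{Aut}(F)$ under the faithful action of the holomorph on $\Omega$, so $F' \cong F \rtimes \mathbf{Z}$. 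Then $\Omega, F, F'$ are countable; $F$ acts freely and transitively on $\Omega$, and $F'$ is transitive, so $F$ and $F'$ have the same orbits; $F \lneq F'$ properly, since $\phi$ fixes the identity of $\Omega$ while no non-trivial translation does; and the $F'$-stabiliser of any point of $\Omega$ is conjugate to $\langle \phi \rangle \cong \mathbf{Z}$, hence amenable. So $F \lneq F'$ fulfills all the hypotheses of Theorem \ref{thm-c*-g(f,f')}.

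\emph{Conclusion.} Let $G$ be the index-two subgroup of $G(F,F')$ preserving the bipartition of $T_\Omega$; it contains a copy of $F$, hence of $\Gamma$. Its action on $T_\Omega$ is still minimal and of general type, fixators of half-trees in $G$ are non-trivial (such a fixator in $G(F,F')$ fixes vertices of both types, hence preserves the bipartition), and $G_\xi \leq G(F,F')_\xi$ is amenable for some $\xi \in \partial T_\Omega$ — these are precisely the properties through which Theorem \ref{thm-c*-g(f,f')} is deduced from Theorem \ref{thm-general-c*-intro}, so $G$ has trivial amenable radical and is not $C^\ast$-simple. Finally: if $\Gamma$ is finitely generated, so are $F$ and $F'$, and since $F$ is transitive the group $G(F,F')$, hence its finite-index subgroup $G$, is finitely generated (\cite{LB-ae}); if $\Gamma$ is torsion free, so is $F'$, and since $G$ acts on $T_\Omega$ without inversions every torsion element of $G$ fixes a vertex, so has torsion — therefore trivial — local action there, thus fixes its star, and by induction is trivial, so $G$ is torsion free. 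When $\Gamma$ is both, $F$ is too and both conclusions apply.

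\emph{Main difficulty.} Granting Theorem \ref{thm-c*-g(f,f')}, there is no hard analysis: the work lies in the two structural inputs about the groups $G(F,F')$ (that $G(F,F')$ contains the vertex-fixing copy of $F$, and that the bipartition-preserving subgroup $G$ retains the hypotheses of Theorem \ref{thm-general-c*-intro}), both provided by \cite{LB-ae}, and — the genuinely delicate point — in choosing $F \lneq F'$ with the \emph{same} orbits on $\Omega$ (needed so that local actions can pass between adjacent vertices from $F'$ down to $F$), strict inclusion, and amenable point stabilisers. Passing from $\Gamma$ to $\Gamma \ast \mathbf{Z}$ is exactly what furnishes a non-trivial infinite-order automorphism with which to enlarge $F$ into such an $F'$ uniformly, the infinite order being what keeps $F'$, and hence $G$, torsion free.
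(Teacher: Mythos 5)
Your proof is correct, and at the strategic level it coincides with the paper's: embed $\Gamma$ into a pair of permutation groups $F\lneq F'$ satisfying the hypotheses of Theorem \ref{thm-c*-g(f,f')}, then pass to the bipartition-preserving subgroup of index two (the paper does the same, having noted just before the proof of Theorem \ref{thm-c*-g(f,f')} that the argument applies to $G(F,F')^\ast$), and settle finite generation via the generation result of \cite{LB-ae} and torsion-freeness via the local-action argument of Lemma \ref{lem-fixator-F-freely}--\ref{lem-torsion-free} type applied to $F'$. Where you differ is in the concrete pair: the paper takes $F'=\Gamma\wr A$ and $F=\Gamma^{(A)}$ acting on $\Omega=F'/A$, with $A$ an auxiliary countable amenable group (point stabilizers are the conjugates of $A$), chosen finitely generated and/or torsion free as needed, and it embeds $\Gamma$ via an embedding of $F'$ into a vertex stabilizer; you take the left-regular action of $F=\Gamma\ast\mathbf{Z}$ and enlarge it inside the holomorph by the inner automorphism $\mathrm{Inn}_t$, so that point stabilizers are infinite cyclic, and you embed $\Gamma$ through the constant-local-action copy of $F$ fixing a vertex. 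The two constructions are structurally parallel (in both, $F$ is simply transitive and $F'$ is $F$ extended by an amenable group of automorphisms of $F$, which is exactly a point stabilizer), so neither buys much over the other technically; the paper's free parameter $A$ yields a large family of embeddings, while your single choice $\Gamma\ast\mathbf{Z}$ handles the finitely generated and torsion-free cases uniformly without adjusting the auxiliary group. Two small points to tighten: the finite-generation statement you quote from \cite{LB-ae} is for $F$ \emph{simply} transitive (which your $F$ is, so no harm), and both the properness $F\lneq F'$ and the infinite order of $\mathrm{Inn}_t$ rest on the centre of $\Gamma\ast\mathbf{Z}$ being trivial, i.e.\ on $\Gamma\neq 1$, which you correctly set aside at the start; your verification that the index-two subgroup still satisfies the hypotheses of Theorem \ref{thm-general-c*-intro} (minimality, general type) is asserted rather than proved, but this is exactly the easy check the paper also leaves implicit.
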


\subsection*{Acknowledgments}

I warmly thank Emmanuel Breuillard for explaining the work \cite{BKKO}, for stimulating discussions and for mentioning the question of $C^\ast$-simplicity of lattices in product of trees. I would also like to thank the organizers of the conference \textit{Geometries in Action} in Lyon in honor of \'{E}tienne Ghys, during which part of the interactions with Emmanuel Breuillard took place. Finally I also thank Yves de Cornulier and Pierre de la Harpe for their useful remarks.

\section{General result} \label{sec-general-result}

Throughout the paper, $T$ will be a simplicial tree that is \textit{not} assumed to be locally finite. We call a subtree of $T$ a half-tree if it is one of the two components obtained when removing an edge in $T$. If $A$ is a subtree of $T$ and $G$ is a group acting on $T$, the stabilizer of $A$ in $G$ is the set of elements of $G$ preserving $A$ setwise, and the fixator $G_A$ of $A$ is the set of elements of $G$ fixing pointwise $A$. We say that the action of $G$ on $T$ is minimal if there is no proper non-empty $G$-invariant subtree.

Recall that $g \in \aut$ is called hyperbolic if there is a bi-infinite geodesic line, called the axis of $g$, on which $g$ acts by translation. When this holds, $g$ has exactly two fixed points in $\partial T$. We say that the action of a group $G$ on $T$ is of general type if there exist two hyperbolic elements in $G$ without common fixed points in $\partial T$. We point out that the terminology strongly hyperbolic is sometimes used, see \cite{dlH-survey}. In this situation, the ping-pong argument applies and yields non-abelian free subgroups in $G$ \cite[p.\ 152]{Pa-Va}. Classical results about isometric group actions on trees assert that if the action of $G$ on $T$ is not of general type, then one of the following happens: $G$ stabilizes a vertex or an edge, or $G$ has a unique fixed pair or a unique fixed point in $\partial T$ \cite[Propositions 1 \& 2]{Pa-Va}.

\subsection{Construction of a boundary action}

Recall that if $G$ is a countable group, a $G$-boundary is a compact space $X$ endowed with an action of $G$ by homeomorphisms, such that every $G$-orbit in $X$ is dense, and that is strongly proximal, that is every probability measure on $X$ has a Dirac measure in the weak-closure of its $G$-orbit. The aim of this paragraph is to explain how to construct a $G$-boundary starting from an action of $G$ on $T$. 

\bigskip

If $v$ is a vertex of $T$ and $x \in T \cup \partial T$ is either a vertex or an end, there exists a unique geodesic from $v$ to $x$, that will be denoted $\left[v,x\right]$. If $v_1,\ldots,v_n$ are neighbours of the vertex $v$, we denote by $U(v;v_1,\ldots,v_n)$ the set of $x \in T \cup \partial T$ such that $\left[v,x\right]$ contains none of the vertices $v_1,\ldots,v_n$. Equipped with the topology generated by all the subsets $U(v;v_1,\ldots,v_n)$, where $v$ ranges over the set of vertices and $n \geq 0$, the set $T \cup \partial T$ is a compact space (see \cite[Proposition 4.2]{Mon-Sha}). We are grateful to Pierre-Emmanuel Caprace for pointing out the reference \cite{Mon-Sha} to our attention.

The action of the group $\mathrm{Aut}(T)$ on the tree $T$ extends to an action on $T \cup \partial T$ by homeomorphisms. Clearly $\mathrm{Aut}(T)$ preserves the (open) set $T_f$ of vertices of $T$ of finite degree, so that we also have an action of $\mathrm{Aut}(T)$ on the space $X = (T \setminus T_f) \cup \partial T$.

\begin{prop} \label{prop-construction-boundary}
Let $G \leq \aut$ be a countable group, whose action on $T$ is minimal and of general type. Then $X = (T \setminus T_f) \cup \partial T$ is a $G$-boundary. 
\end{prop}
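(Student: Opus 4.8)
The plan is to verify the three defining properties of a $G$-boundary for $X = (T \setminus T_f) \cup \partial T$: that $X$ is compact, that every $G$-orbit is dense, and that the action is strongly proximal. Compactness of $X$ will follow from the fact, recalled in the excerpt, that $T \cup \partial T$ is compact for the topology generated by the sets $U(v; v_1, \dots, v_n)$, together with the observation that $X$ is closed in $T \cup \partial T$: its complement is $T_f$, the set of finite-degree vertices, and each such vertex $w$ is isolated because $U(w; v_1, \dots, v_n)$ with $v_1, \dots, v_n$ the (finitely many) neighbours of $w$ equals $\{w\}$. Hence $T_f$ is open, $X$ is closed in a compact space, and so compact.

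Next I would address density of orbits. Here the hypotheses that the action is minimal and of general type enter. Since the action is of general type, by the ping-pong setup referenced via \cite{Pa-Va} there are hyperbolic elements with no common endpoints, and the limit set $\Lambda \subseteq \partial T$ of $G$ is non-empty, $G$-invariant and closed; minimality of the action on $T$ forces $\Lambda = \partial T$ (otherwise one builds a proper invariant subtree, namely the convex hull of $\Lambda$). The standard north–south dynamics of a hyperbolic element then shows that the $G$-orbit of any point of $\partial T$ accumulates on all of $\partial T$, and pushing vertices of $T$ toward infinity along axes shows every orbit in $X$ accumulates on $\partial T$ as well; finally, since $\partial T$ accumulates on every point of $T \setminus T_f$ (any infinite-degree vertex is a limit of ends, by the description of the $U(v;\dots)$), every $G$-orbit is dense in $X$.

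The main work, and the step I expect to be the genuine obstacle, is strong proximality: given a probability measure $\mu$ on $X$, I must find a net $g_n \in G$ with $g_n \mu$ converging weak-$*$ to a Dirac mass. The idea is to use a hyperbolic element $h \in G$ with attracting fixed point $\xi^+ \in \partial T \subseteq X$ and repelling fixed point $\xi^-$: for any probability measure $\mu$, the pushforwards $h^n \mu$ converge to $\mu(\{\xi^-\}) \delta_{\xi^-} + (1 - \mu(\{\xi^-\})) \delta_{\xi^+}$ by north–south dynamics on $X$ (one must check the dynamics really take place in $X$ and not merely on $\partial T$, i.e.\ that no mass gets stuck at finite-degree vertices — but those are handled since $h$ moves them off to infinity unless fixed, and a hyperbolic element fixes no vertex). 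If $\mu$ has no atom at $\xi^-$ we are done; otherwise the limit is a convex combination of two Dirac masses, and one applies a second hyperbolic element $h'$ in general position with $h$ (available since the action is of general type) whose fixed points differ from $\xi^{\pm}$, so that iterating $h'$ collapses the two-point measure to a single Dirac mass. I would organize this as: first reduce an arbitrary $\mu$ to a measure supported on at most two points of $\partial T$ using one hyperbolic element, then collapse that to one point using a second. Care is needed to make the argument uniform over all $\mu$ (the relevant convergences are only "eventually" and depend on where the atoms sit), but since we only need one Dirac mass in the weak-$*$ closure of the orbit, choosing the group elements after seeing $\mu$ is permitted, and the two-step scheme above suffices.
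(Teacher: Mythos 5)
Your proposal is correct and, for the heart of the matter, runs along the same lines as the paper: the paper also establishes minimality of the $G$-action on $X$ by showing that hyperbolic fixed points must lie in (and are dense in) any closed invariant set, using minimality on $T$ exactly where you use it to get that the limit set is all of $\partial T$ (your convex-hull-of-$\Lambda$ argument and the paper's ``every half-tree meets the axis of a hyperbolic element'' are two phrasings of the same point). The main difference is one of economy: the paper does not verify compactness explicitly (it only remarks that $T_f$ is open), quotes \cite[Proposition 4.4 (vi)]{Mon-Sha} for the density of $\partial T$ in $X$, and for strong proximality simply refers to \cite[Example 2]{Ozawa-notes}, whereas you carry out the standard two-step contraction argument (collapse $\mu$ to a combination of the two endpoints of one hyperbolic element, then push with a second hyperbolic element in general position); that argument is the expected one and is sound, the only routine detail being the existence of a hyperbolic element whose fixed pair avoids $\{\xi^{+},\xi^{-}\}$, which indeed follows from the general type assumption. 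One small caution: your claim that any infinite-degree vertex is a limit of ends ``by the description of the $U(v;\dots)$'' is not a consequence of the topology alone --- a branch at such a vertex could a priori be a finite tree, in which case it contributes no end (and the vertex could even be isolated in $X$). You need minimality here: a minimal action of a group containing hyperbolic elements forces $T$ to have no finite branches, so every branch carries an end; this is precisely the point the paper delegates to \cite{Mon-Sha}. With that one-line fix, your proof is complete.
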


\begin{proof}
We let $C$ be a closed non-empty $G$-invariant subset of $X$, and we prove that $C=X$. Remark that if $g \in G$ is hyperbolic, then $(g^n x)$ converges to the attracting fixed point of $g$ for every $x$ different from the repelling fixed point of $g$. Combining this observation with the fact that no point of $X$ can be fixed by all the hyperbolic elements of $G$ (because the action of $G$ is of general type), we see that $C$ must contain all fixed point of hyperbolic elements of $G$.

We claim that this implies that $C=X$. To see this, let us first prove that every end of $T$ lies in the closure of the set of fixed points of hyperbolic elements of $G$. To prove this, it is enough to show that every half-tree intersects the axis of some hyperbolic element. Argue by contradiction and assume that this is not true. Then the union of the axes of the hyperbolic elements of $G$ is therefore contained in a proper subtree of $T$. But it is classical that the latter is a $G$-invariant subtree, so we obtain a contradiction with the minimality of the action of $G$ on $T$. Therefore we have proved that $C$ must contain $\partial T$. The latter being dense in $X$ \cite[Proposition 4.4 (vi)]{Mon-Sha}, one must have $C=X$.

The fact that the action of $G$ on $X$ is strongly proximal is obtained similarly, and we refer to \cite[Example 2]{Ozawa-notes}. 
\end{proof}

\subsection{Proof of Theorem A}

We now give two different proofs of Theorem \ref{thm-general-c*-intro} from the introduction.

\begin{proof}[Proof 1]
The fact that $G$ has no non-trivial amenable normal subgroup is classical, we repeat the argument for completeness. We actually prove that if $N$ is a normal subgroup of $G$ not containing non-abelian free subgroups, then $N$ must be trivial. Since it does not contain non-abelian free subgroups, the group $N$ must stabilize a vertex or an edge, or must have a unique finite orbit in $\partial T$. In the latter case, this finite orbit has cardinality one or two and must be $G$-invariant since $N$ is normal in $G$, which is impossible since the action of $G$ is of general type. Similarly if $N$ stabilizes an edge without fixing a vertex, then this edge has to be $G$-invariant, which contradicts the existence of hyperbolic elements in $G$. So the set of vertices of $T$ fixed by $N$ is a non-empty subtree, and by $G$-invariance and minimality of the action of $G$ we obtain that it must be the entire tree $T$, which exactly means that $N$ is trivial.

We now turn to the proof that $G$ is not $C^\ast$-simple. Since the action of $G$ on $T$ is minimal and of general type, the set $X = (T \setminus T_f) \cup \partial T$ is a $G$-boundary by Proposition \ref{prop-construction-boundary}. Fixators of half-trees in $G$ are non-trivial by assumption, so by definition of the topology this implies that the action of $G$ on $X$ is not topologically free. Moreover there is a point stabilizer $G_{\xi}$ that is assumed to be amenable, so according to \cite[Proposition 2.6]{BKKO}, the group $G$ cannot be $C^\ast$-simple.
\end{proof}

We now give a second and independent proof of Theorem \ref{thm-general-c*-intro}. 

\begin{proof}[Proof 2]
As proved in \cite{H-O}, the existence of an amenable subgroup $H \leq G$ together with non-trivial elements $a,b \in G$ having disjoint support in $G / H$ imply that $G$ cannot be $C^\ast$-simple. Indeed, an easy computation shows that since $a,b$ have disjoint support in $G/H$, the element $(1-a)(1-b)$ acts trivially by convolution on $\ell^2(G/H)$. Since $H$ is amenable, the quasi-regular representation $\lambda_{G/H}$ must be weakly contained in $\lambda_G$, while the converse is impossible by the previous observation. So $G$ is not $C^\ast$-simple.

We take $H = G_{\xi}$, so that $G/H$ can be identified with the $G$-orbit of $\xi$ in $\partial T$. Consider an edge $e$ of $T$, and let $T_1$ and $T_2$ be the two half-trees emanating from $e$. The partition $\partial T = \partial T_1 \sqcup \partial T_2$ yields a partition of $G \cdot \xi$ into two non-empty subsets. By assumption fixators of half-trees in $G$ are not reduced to the identity, so we may plainly find non-trivial elements $a \in G_{T_1}$ and $b \in G_{T_2}$, and these have disjoint support in $G \cdot \xi$ by construction. This proves the statement.
\end{proof}

\subsection{Product of trees}

The groups $G(F,F')$ that will be shown not be $C^\ast$-simple in Section \ref{sec-G(F,F')} are connected to Burger-Mozes' finitely presented torsion free simple groups constructed as lattices in the product of two trees \cite{BM-IHES-2}. Therefore this naturally raises the question whether these groups are $C^\ast$-simple. The following result, which relies on \cite{BKKO}, shows that this is indeed the case. We mention that the $C^\ast$-simplicity of these groups is also obtained in \cite{Kar-Sag}. 

\begin{prop} \label{prop-product-trees}
Let $\Gamma  \leq \mathrm{Aut}(T_1) \times \ldots \times \mathrm{Aut}(T_n)$ be a discrete subgroup, such that every locally finite subgroup of $\Gamma$ is finite. Then $\Gamma$ is $C^\ast$-simple if and only if $\Gamma$ has trivial amenable radical. 
\end{prop}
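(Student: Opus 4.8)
The plan is to deduce Proposition~\ref{prop-product-trees} from the theorem of \cite{BKKO} according to which, for a countable group having only countably many amenable subgroups, $C^\ast$-simplicity is equivalent to triviality of the amenable radical. Since $C^\ast$-simplicity always forces the amenable radical to be trivial --- the classical obstruction recalled in the introduction --- the whole point will be to show that $\Gamma$ has only countably many amenable subgroups. I would reduce this in turn to a structural statement: a countable group has only countably many finitely generated subgroups (each being generated by one of the group's countably many finite subsets), so it suffices to prove that \emph{every amenable subgroup of $\Gamma$ is finitely generated}.

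To establish this, fix an amenable subgroup $H \leq \Gamma$, write $G = \mathrm{Aut}(T_1) \times \cdots \times \mathrm{Aut}(T_n)$ and let $p_i$ denote the projections. As a subgroup of the discrete group $\Gamma$, the group $H$ is discrete, hence closed, in $G$. For each $i$ the group $H$ acts on $T_i$ via $p_i$, and since it contains no non-abelian free subgroup this action is not of general type; by the classification recalled at the beginning of Section~\ref{sec-general-result} (following \cite{Pa-Va}), $H$ fixes a vertex or stabilises an edge of $T_i$, or preserves a line in $T_i$, or fixes an end of $T_i$. Replacing $H$ by a subgroup of finite index, which is harmless for the purpose of proving finite generation, I may assume that in each factor $H$ fixes a vertex, preserves a line $L_i$, or fixes an end $\xi_i$. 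In the line case the action on $L_i$ gives a homomorphism $H \to \mathrm{Isom}(\mathbf{Z}) = \mathbf{Z} \rtimes \mathbf{Z}/2\mathbf{Z}$, and in the end case the Busemann cocycle gives a homomorphism $\beta_i \colon H \to \mathbf{Z}$; the product of all of these is a homomorphism $\varphi$ from $H$ onto a group that is virtually $\mathbf{Z}^k$ for some $k \leq n$, and in particular finitely generated. Its kernel $K$ acts on every $T_i$ with all elements elliptic: this is clear in the vertex and line cases (in the latter an element of $K$ fixes $L_i$ pointwise), and in the end case an element of $K$ fixes $\xi_i$ with trivial Busemann character, hence is elliptic because a hyperbolic element fixing an end translates towards it.

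It then remains to see that $K$ is finite, and I expect this to be the heart of the matter. The point is that $K$ is a \emph{locally finite} group: any finitely generated subgroup $K_0 \leq K$ has all of its elements elliptic in each $T_i$, and it is classical that a finitely generated group acting on a tree with all elements elliptic fixes a vertex, so $K_0$ stabilises a vertex in every $T_i$; together with the discreteness of $\Gamma$ in $G$ this forces $K_0$ to be finite. The standing hypothesis that locally finite subgroups of $\Gamma$ are finite then yields that $K$ itself is finite. Consequently $H$ is an extension of the finitely generated group $\varphi(H)$ by the finite group $K$, hence finitely generated, as needed, and the statement follows from \cite{BKKO} as explained above. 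The main obstacle in this plan is precisely the finite generation of amenable subgroups of $\Gamma$: it relies on the interplay between the discreteness of $\Gamma$ in the product, the classification of amenable group actions on trees, and the hypothesis on locally finite subgroups, which jointly confine the ``elliptic part'' of an amenable subgroup to a finite group; the remaining steps --- the passage to \cite{BKKO} and the counting of finitely generated subgroups --- are routine.
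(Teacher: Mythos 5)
Your proposal is correct and follows essentially the same route as the paper: both reduce to the criterion of \cite{BKKO} for groups with only countably many amenable subgroups, and both show an amenable subgroup is finitely generated by projecting to each tree, mapping onto a virtually free abelian group via translation/Busemann homomorphisms, and showing the elliptic kernel is locally finite (hence finite by hypothesis) using discreteness/properness. Your treatment of the line and edge cases via finite-index passage is just a slightly more detailed version of the paper's reduction to actions without inversion.
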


\begin{proof}
According to Theorem 3.8 in \cite{BKKO}, the equivalence between triviality of the amenable radical and $C^\ast$-simplicity holds for the class of countable groups having only countably many amenable subgroups, so it enough to show that $\Gamma$ has this property. Upon replacing $\Gamma$ by a finite index subgroup, we may assume that $\Gamma$ acts without inversion on each $T_i$. 

We claim that every amenable subgroup $H \leq \Gamma$ is virtually free abelian of rank $k$, with $k \leq n$. In particular the group $H$ is finitely generated, and since the group $\Gamma$ is countable, this implies that there are only countably many amenable subgroups. To prove the claim, we let $H$ be an amenable subgroup of $\Gamma$. Since $H$ does not contain non-abelian free subgroups, the projection of the action of $H$ on each $T_i$ must fix a vertex or a point in $\partial T_i$. If $r \geq 0$ is the number of trees for which the first situation does not happen, then we have a natural morphism from $H$ to $\mathbf{Z}^r$, whose kernel is denoted by $K$. Every finitely generated subgroup of $K$ must have a fixed point in each $T_i$. By assumption $\Gamma$ must act properly on the product of trees, so it follows that $K$ is locally finite, and therefore finite thanks to the assumption on $\Gamma$. So $H$ is finite-by-(free abelian of finite rank), and the proof of the claim is complete.
\end{proof}

\section{Piecewise prescribed tree automorphisms} \label{sec-Pw}

In this paragraph we consider a \enquote{piecewise-ation} process for subgroups of $\aut$, and show how this automatically provides examples of countable groups that are not $C^\ast$-simple.

\bigskip

Let $A$ be a finite subtree of $T$, and let $v_1,\ldots,v_n$ be the vertices of $A$ having at least one neighbour that is not in $A$. For $i=1,\ldots,n$, we denote by $T_i$ the subtree of $T$ made of vertices whose projection on $A$ is the vertex $v_i$. By construction the subtrees $T_i$ are disjoint, and every vertex of $T$ that is not in $A$ lies in some $T_i$. With a slight abuse of notation, we will write $T \setminus A = \sqcup_{i=1}^n T_i$, although every $v_i$ lies in both $T_i$ and $A$. 

\begin{defi}
For a subgroup $G \leq \aut$, we denote by $\Pg$ the set of automorphisms of $T$ acting piecewise like $G$, i.e.\ the set of $\gamma \in \aut$ so that there exists a finite subtree $A$ of $T$ such that, if $T \setminus A = \sqcup_{i=1}^n T_i$, then for every $i$ there exists $g_i \in G$ such that $\gamma$ and $g_i$ coincide on $T_i$.
\end{defi}

We leave to the reader the verification that $\Pg$ is indeed a subgroup of $\aut$, which is countable as soon as $G$ and $T$ are countable. When the tree $T$ is locally finite, $\Pg$ coincides with the intersection in $\mathrm{Homeo}(\partial T)$ of the topological full group associated to $G \curvearrowright \partial T$ and the group $\aut$.

\bigskip

Recall that if $\mathcal{P}$ is a property of groups, we say that a group is locally $\mathcal{P}$ if every finitely generated subgroup has $\mathcal{P}$. 

\begin{lem} \label{lem-fixator-amenable}
Let $\mathcal{P}$ be a property of groups stable by taking subgroups, and $G \leq \aut$ a subgroup such that fixators of edges in $G$ have $\mathcal{P}$. Then for every $\xi \in \partial T$, the stabilizer of $\xi$ in $G$ is (locally $\mathcal{P}$)-by-$\mathbf{Z}$ or locally $\mathcal{P}$.
\end{lem}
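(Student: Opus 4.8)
The plan is to exploit the Busemann character centered at $\xi$. Fix once and for all a geodesic ray $\rho = (v_0,v_1,v_2,\ldots)$ with $v_n \to \xi$, and let $e_N$ denote the edge $\{v_N,v_{N+1}\}$. If $g$ lies in the stabilizer $G_\xi$ of $\xi$ in $G$, then $g\rho = (gv_0,gv_1,\ldots)$ is again a geodesic ray converging to $\xi$, so $\rho$ and $g\rho$ eventually coincide: one may write $g v_{p+k} = v_{q+k}$ for all $k \geq 0$, and a short argument shows that the integer $\beta_\xi(g) := p-q$ does not depend on the choice of such $p,q$. One checks that $\beta_\xi \colon G_\xi \to \mathbf{Z}$ is a group homomorphism (it is the restriction to $G_\xi$ of the Busemann character of $\aut_\xi$); its image is a subgroup of $\mathbf{Z}$, hence trivial or infinite cyclic, and I set $K = \ker \beta_\xi$, a normal subgroup of $G_\xi$ with $G_\xi/K$ either trivial or isomorphic to $\mathbf{Z}$. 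Both alternatives in the statement will then follow once $K$ is shown to be locally $\mathcal{P}$.

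Next I would describe $K$ concretely. From the relation above, $\beta_\xi(g) = 0$ forces $p=q$, i.e.\ $g$ fixes pointwise the sub-ray $(v_p,v_{p+1},\ldots)$; conversely any $g \in G$ fixing some sub-ray of $\rho$ pointwise lies in $G_\xi$ with $\beta_\xi(g) = 0$. Hence $K = \bigcup_{N \geq 0} K_N$, where $K_N = \{ g \in G : g v_n = v_n \text{ for all } n \geq N \}$, and the $K_N$ form an increasing, in particular directed, chain of subgroups of $G$. Now the key point is elementary: an element of $K_N$ fixes both endpoints of $e_N$, so $K_N \leq G_{e_N}$, and $G_{e_N}$ has $\mathcal{P}$ by hypothesis; since $\mathcal{P}$ passes to subgroups, $K_N$ has $\mathcal{P}$. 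Any finitely generated subgroup $H \leq K$ has its finitely many generators lying in $K_{N_1},\ldots,K_{N_r}$, hence $H \leq K_N$ with $N = \max_i N_i$, so $H$ has $\mathcal{P}$; thus $K$ is locally $\mathcal{P}$. Combining with the first paragraph: if $\beta_\xi$ is trivial then $G_\xi = K$ is locally $\mathcal{P}$, and otherwise $G_\xi$ is $K$-by-$\mathbf{Z}$, i.e.\ (locally $\mathcal{P}$)-by-$\mathbf{Z}$.

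The only step requiring genuine care is the well-definedness and additivity of $\beta_\xi$: that two geodesic rays towards a common end coincide past some vertex with a relative shift independent of the chosen basepoints, and that this shift is additive under composition. This is classical tree combinatorics — conveniently handled via median vertices, or via the horofunction $b_\xi(x) = \lim_n (d(x,v_n) - n)$ which $\aut_\xi$ translates by $\beta_\xi$ — and once it is in place everything else reduces to the soft directed-union argument above. Note that no countability or local finiteness hypothesis on $T$ is used.
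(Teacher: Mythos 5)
Your proof is correct and follows essentially the same route as the paper: your kernel $K$ of the Busemann character is exactly the paper's normal subgroup $G_\xi^0$ of elements fixing a subray pointwise, with quotient trivial or infinite cyclic, and both arguments conclude by noting that a finitely generated subgroup of $K$ fixes a subray, hence sits in an edge fixator, which has $\mathcal{P}$. The only difference is that you make the homomorphism to $\mathbf{Z}$ explicit via $\beta_\xi$, which the paper leaves implicit.
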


\begin{proof}
Let $(v_0,v_1,\ldots)$ be a geodesic ray representing the end $\xi$. The group $G_{\xi}$ admits a normal subgroup $G_{\xi}^0$ consisting of elements $g \in G$ fixing an infinite subray $(v_n,v_{n+1},\ldots)$, where $n \geq 0$ depends on $g$. Moreover the quotient of $G_{\xi}$ by $G_{\xi}^0$ is either infinite cyclic or trivial, according to whether there exists a hyperbolic isometry in $G$ having $\xi$ as a fixed point. Therefore it is enough to show that every finitely generated subgroup of $G_{\xi}^0$ has $\mathcal{P}$. Now for such a finitely generated subgroup $K$, there is an $N \geq 1$ such that the entire $K$ fixes the subray $(v_N,v_{N+1},\ldots)$. In particular $K$ lies inside the fixator of an edge in $G$. The latter has $\mathcal{P}$ by assumption and $\mathcal{P}$ goes to subgroups, so the proof is complete.
\end{proof}

\begin{lem} \label{lem-P-stable}
Let $\mathcal{P}$ be a property of groups stable by taking subgroups, quotients and extensions. Let $G$ be a group generated by a family of normal subgroups $(N_i)_{i \in I}$ together with a subgroup $H$. If all the $N_i$ and $H$ have $\mathcal{P}$, then $G$ is locally $\mathcal{P}$.
\end{lem}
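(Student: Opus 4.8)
The plan is to reduce the statement to the finitely generated case immediately: a finitely generated subgroup $K \leq G$ is generated by finitely many elements $g_1, \dots, g_m$, each of which is a product of elements lying in finitely many of the $N_i$ and in $H$. Hence $K$ is contained in the subgroup $G_0$ generated by $H$ together with $N_{i_1}, \dots, N_{i_k}$ for some finite subset $\{i_1, \dots, i_k\} \subseteq I$. Since $\mathcal{P}$ is stable under subgroups, it suffices to prove that $G_0$ has $\mathcal{P}$; equivalently, we may assume from the start that the index set $I = \{1, \dots, k\}$ is finite, and we induct on $k$.

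For the base case $k = 0$ the group is just $H$, which has $\mathcal{P}$ by hypothesis. For the inductive step, write $M = N_1 \cdots N_{k-1} H$ for the subgroup generated by $H$ and the first $k-1$ normal subgroups; by the inductive hypothesis $M$ has $\mathcal{P}$ (note that $M$ is again of the form "finitely many normal subgroups together with $H$", with the $N_i$ still normal in the ambient $G$, hence normal in $M$). Now $N_k$ is normal in $G$, so $N_k M$ is a subgroup of $G$ containing both $N_k$ and $M$, and it equals the full group $N_1 \cdots N_k H$; moreover $N_k$ is normal in $N_k M$. We thus have a short exact sequence
\[
1 \longrightarrow N_k \cap M \longrightarrow M \longrightarrow (N_k M)/N_k \longrightarrow 1,
\]
wait — more directly: $N_k M / N_k \cong M/(N_k \cap M)$ is a quotient of $M$, hence has $\mathcal{P}$ (stability under quotients), while $N_k$ has $\mathcal{P}$ by hypothesis. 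Since $N_k M$ is an extension of a $\mathcal{P}$-group by a $\mathcal{P}$-group, stability under extensions gives that $N_k M = N_1 \cdots N_k H$ has $\mathcal{P}$, completing the induction.

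The only genuinely delicate point is the bookkeeping around which groups are normal in which: one must check that when passing to the subgroup $M = N_1 \cdots N_{k-1} H$, the subgroups $N_1, \dots, N_{k-1}$ are still normal \emph{in $M$} (which is immediate, since they are normal in $G \supseteq M$), so that the inductive hypothesis genuinely applies to $M$; and that $N_k$ is normal in the product $N_k M$, which again follows from normality of $N_k$ in $G$. Everything else is a routine application of the three closure properties of $\mathcal{P}$, so there is no real obstacle beyond organizing the induction cleanly.
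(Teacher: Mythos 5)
Your proof is correct and follows essentially the same route as the paper: reduce to a finitely generated subgroup sitting inside some $N_{i_1}\cdots N_{i_k}H$ (a subgroup because the $N_i$ are normal in $G$), then induct on $k$ using the second isomorphism theorem together with stability of $\mathcal{P}$ under quotients and extensions. The only remark is cosmetic: the detour through the short exact sequence with kernel $N_k\cap M$ (the ``wait'' sentence) should be cleaned up, since the sequence you actually use is $1 \to N_k \to N_kM \to N_kM/N_k \to 1$ with $N_kM/N_k \cong M/(N_k\cap M)$ a quotient of $M$.
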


\begin{proof}
Observe that a subgroup $K$ generated by $H$ and some normal subgroup $N \triangleleft G$ with $\mathcal{P}$ must be an extension of $N$ by $H/(H \cap N)$. These have $\mathcal{P}$ by assumption, so $K$ has $\mathcal{P}$ as well.

Now every finitely generated subgroup of $G$ lies in some subgroup of the form $N_{i_1} \ldots N_{i_k} H$ because all the $N_i$ are normal. An easy induction shows that every $N_{i_1} \ldots N_{i_k} H$ has $\mathcal{P}$ thanks to the previous observation, so the statement is proved.
\end{proof}

Assume that $\mathcal{P}$ is a property of groups with the following properties: 
\begin{itemize}
\item $\mathcal{P}$ is stable by taking subgroups, quotients and extensions;
\item a group has $\mathcal{P}$ if and only if all its finitely generated subgroups have $\mathcal{P}$ (i.e.\ $\mathcal{P}$ = locally $\mathcal{P}$);
\item every group that is (locally finite)-by-$\mathcal{P}$ has $\mathcal{P}$. 
\end{itemize}

Examples of such properties are amenability, elementary amenability or being locally finite. 

\begin{prop} \label{prop-pw-amenable}
Let $G \leq \aut$ such that fixators of vertices in $G$ have $\mathcal{P}$. Then fixators of vertices in $\Pg$ also have $\mathcal{P}$.

In particular fixators of ends in $\Pg$ are $\mathcal{P}$-by-$\mathbf{Z}$ or $\mathcal{P}$.
\end{prop}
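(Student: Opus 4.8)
The plan is to reduce first to the statement about fixators of \emph{vertices}, and then to study the fixator of a vertex $v$ in $\Pg$ by filtering it according to the size of a defining subtree and peeling off, one sphere at a time, the behaviour of an element near $v$. The ``in particular'' clause is immediate from the first assertion: the fixator of an edge in $\Pg$ is contained in the fixator of one of its endpoints, hence has $\mathcal{P}$ (the first assertion, together with stability of $\mathcal{P}$ under subgroups); Lemma~\ref{lem-fixator-amenable} applied to $\Pg$ then gives that every end fixator in $\Pg$ is (locally $\mathcal{P}$)-by-$\mathbf{Z}$ or locally $\mathcal{P}$, which is the claim since $\mathcal{P}$ coincides with locally $\mathcal{P}$. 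So fix a vertex $v$; I want $\Pg_v\in\mathcal{P}$. For $n\geq 0$ let $\Gamma_n$ be the set of $\gamma\in\Pg$ that fix $v$ and admit a defining subtree contained in the ball $B(v,n)$. Since an automorphism fixing $v$ stabilises every $B(v,n)$, one checks that $\Gamma_n$ is a subgroup (to see $\gamma_1\gamma_2\in\Gamma_n$, note that if $\gamma_i$ admits the defining subtree $A_i$ then $\gamma_1\gamma_2$ admits $A_2\cup\gamma_2^{-1}(A_1)$, which still lies in $B(v,n)$; for inverses, $\gamma^{-1}$ admits $\gamma(A)$), that $\Gamma_n\subseteq\Gamma_{n+1}$, and that $\Pg_v=\bigcup_n\Gamma_n$. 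As $\mathcal{P}=$ locally $\mathcal{P}$, it suffices to prove by induction on $n$ the statement $\mathcal{S}(n)$: \emph{for every tree $T'$, every subgroup $H\leq\mathrm{Aut}(T')$ all of whose vertex fixators have $\mathcal{P}$, and every vertex $u$ of $T'$, the group of $\gamma\in\mathrm{Pw}(H)$ fixing $u$ and admitting a defining subtree contained in $B(u,n)$ has $\mathcal{P}$}. This is the right framework because, although elements of $\mathrm{Pw}(H)$ have finite defining subtrees, they need not stabilise any finite subtree (and $T'$ need not be locally finite), so a straightforward splitting is not available.

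The base case $n=0$ forces the defining subtree to be $\{u\}$, hence $\gamma\in H$, so the group in question is $H_u\in\mathcal{P}$. For the inductive step, write $\Gamma$ for the group attached to $(T',H,u,n+1)$ and let $\rho\colon\Gamma\to\mathrm{Sym}(N(u))$ be the action on the set $N(u)$ of neighbours of $u$. The image of $\rho$ has $\mathcal{P}$: an element $\gamma$ realising a permutation $\pi$ agrees, off the finitely many branches at $u$ met by its defining subtree, with the single element $h_0\in H_u$ governing the piece of that subtree rooted at $u$, so $\pi$ differs from the permutation induced by $h_0$ by a finitely supported permutation; hence $\mathrm{im}(\rho)$ lies in the product of the image of $H_u$ in $\mathrm{Sym}(N(u))$ with the finitary symmetric group $\mathrm{Sym}_{\mathrm{fin}}(N(u))$, in which $\mathrm{im}(\rho)\cap\mathrm{Sym}_{\mathrm{fin}}(N(u))$ is a normal locally finite subgroup with quotient a quotient of $H_u$, and the stability properties of $\mathcal{P}$ (in particular that (locally finite)-by-$\mathcal{P}$ groups have $\mathcal{P}$) give $\mathrm{im}(\rho)\in\mathcal{P}$. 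The kernel $K$ of $\rho$ consists of elements of $\mathrm{Pw}(H)$ fixing $B(u,1)$ pointwise; each such element preserves every branch $S_w$ ($w\in N(u)$), fixes its root, and its restriction to $S_w$ lies in a ``piecewise'' group on $S_w$ whose vertex fixators have $\mathcal{P}$ and admits a defining subtree contained in $B(u,n+1)\cap S_w=B_{S_w}(w,n)$; moreover, after multiplying by the element $h_0$ above one may assume the element is trivial on all but finitely many branches. Thus $K$ is a directed union of finite products of groups covered by $\mathcal{S}(n)$, hence has $\mathcal{P}$; equivalently one may invoke Lemma~\ref{lem-P-stable}, with $H_u$ as distinguished subgroup and the normal subgroups supplied by the restricted products of rigid stabilisers of branches. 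Finally $\Gamma$, being an extension of $\mathrm{im}(\rho)\in\mathcal{P}$ by $K\in\mathcal{P}$, lies in $\mathcal{P}$, which closes the induction.

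The main obstacle is the analysis of the kernel $K$ in the inductive step — precisely, checking that the restriction of an element of $K$ to a branch $S_w$ is again governed by a group with $\mathcal{P}$ vertex fixators and by $\mathcal{S}(n)$, and that only finitely many branches carry genuinely new behaviour once a single element of $H_u$ has been factored out (so that $K$ is a directed union, not an uncontrolled infinite product). This is exactly the point where non-local-finiteness of $T$ and the absence of an invariant finite subtree prevent a naive argument, and where one must use that $\mathcal{P}$ is a local property and that (locally finite)-by-$\mathcal{P}$ groups have $\mathcal{P}$; the reduction to vertex fixators, the description of the action on $N(u)$, and the final assembly via extensions are comparatively routine.
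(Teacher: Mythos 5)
Your overall architecture is the same as the paper's (exhaust the vertex fixator by the subgroups of elements admitting a defining subtree in $B(v,n)$, analyse the action on the set of neighbours, bound the image inside $\mathrm{Sym}_{\mathrm{fin}}(N(u))\cdot\sigma(H_u)$, treat the kernel via branch groups and Lemma~\ref{lem-P-stable}, close the induction by stability of $\mathcal{P}$ under extensions, and deduce the end-fixator statement from edge fixators and Lemma~\ref{lem-fixator-amenable}). However, the step you yourself single out as the main obstacle is genuinely missing, and as formulated it does not go through. You strengthen the induction to a statement $\mathcal{S}(n)$ quantified over all pairs $(T',H)$, and in the kernel analysis you apply $\mathcal{S}(n)$ to the \emph{restriction} of a kernel element $\gamma$ to a branch $S_w$, asserting that this restriction lies in ``a piecewise group on $S_w$ whose vertex fixators have $\mathcal{P}$''. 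But you never say which subgroup $H'\leq\mathrm{Aut}(S_w)$ this is, and the natural candidate fails: on a piece of its defining decomposition contained in $S_w$, $\gamma$ agrees with an element $g\in H$ which maps that piece into $S_w$ but need not stabilize $S_w$ at all, so $g$ does not induce an automorphism of $S_w$ and $\gamma|_{S_w}$ is not visibly an element of $\mathrm{Pw}(H')$ for any group $H'$ of automorphisms of $S_w$ with controlled vertex fixators. So the inductive hypothesis, in the form you set it up, cannot be invoked where you need it; this is exactly the point where the proof is incomplete rather than routine.

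The repair is the paper's device: do not pass to the subtree $S_w$ at all. Extend the branch behaviour of $\gamma$ by the identity on the complementary half-tree; since $\mathrm{id}\in G$, the resulting map is an element of $\Pg$ (same tree, same $G$) fixing that half-tree pointwise and fixing $w$, and its defining subtree can be taken inside $B(w,n)$. Thus the branch groups are the subgroups $\Pg_{T_w}\cap K_n(w)$ of $K_n(w)$, and the induction hypothesis needed is only ``$K_n(v)$ has $\mathcal{P}$ for every vertex $v$ of the \emph{same} tree'', with no change of tree or of acting group — your quantification over all $(T',H)$ is unnecessary once this is done, and it is precisely what makes your version unprovable as stated. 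A secondary imprecision: $K$ is not literally ``a directed union of finite products of groups covered by $\mathcal{S}(n)$'', because the element $h_0\in H_u$ governing $\gamma$ on the cofinitely many untouched branches is a global element, not supported on branches, and need not lie in $K$; the correct assembly is the one you mention in passing, namely Lemma~\ref{lem-P-stable} applied to the group generated by the (restricted product of the) branch fixator groups, which is normalized by everything in sight, together with a vertex stabilizer of $G$, and then passing to the subgroup $K$. With the branch groups defined inside $\Pg$ as above, that assembly and your treatment of $\mathrm{im}(\rho)$ are correct and coincide with the paper's argument.
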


\begin{proof}
For every vertex $v$ and every integer $n \geq 0$, let $K_{n}(v)$ be the set of elements $\gamma \in \Pg_{v}$ such that there exists a finite subtree $A$ inside the ball of radius $n$ around $v$, such that $\gamma$ coincides with an element of $G$ on each component of $T \setminus A$. It is not hard to check that, for a fixed vertex $v$, each $K_n(v)$ is a subgroup, and the sequence $(K_n(v))$ is increasing and ascends to $\Pg_{v}$.  

We fix some vertex $v_0$, and we prove that $\Pg_{v_0}$ has $\mathcal{P}$. Since $\mathcal{P}$ is a local property, by the previous observation it is enough to prove that each $K_n(v_0)$ has $\mathcal{P}$. Let us prove this fact by induction. By definition $K_0(v_0)$ is equal to the stabilizer of $v_0$ in $G$. The latter has $\mathcal{P}$ by assumption, so the result holds for $n=0$.

Now we assume that $n \geq 1$ is so that $K_{n-1}(v)$ has $\mathcal{P}$ for every vertex $v$, and we prove that $K_{n}(v_0)$ has $\mathcal{P}$. We denote by $(v_i)_{i \in I}$ the set of neighbours of $v_0$, and by $T_i$ the unique half-tree containing $v_0$ but not $v_i$. For $i \in I$, we let $F_n(v_i) = \Pg_{T_i} \cap K_n(v_i)$: this is the set of automorphisms $\gamma$ fixing the half-tree $T_i$ and such that there exists a finite subtree $A$ inside the ball of radius $n$ around $v_i$, such that $\gamma$ coincides with an element of $G$ on each component of $T \setminus A$.

Let us denote by $\pi: \Pg_{v_0} \rightarrow \mathrm{Sym}(I)$ the morphism coming from the action of the group $\Pg_{v_0}$ on the set of neighbours of $v_0$. The kernel of $\pi$ is the set of elements of $\Pg$ fixing the $1$-ball around the vertex $v_0$. If we let $N_{n}(v_0) = K_{n}(v_0) \cap \ker \pi$, then we easily see that $N_{n}(v_0)$ is generated by all the subgroups $F_{n-1}(v_i)$ together with the subgroup $G_{B(v_0,1)}$ of $G$ fixing the $1$-ball around $v_0$. By the induction hypothesis, every $K_{n-1}(v_i)$ has $\mathcal{P}$, so a fortiori every $F_{n-1}(v_i)$ has $\mathcal{P}$. Moreover $G_{B(v_0,1)}$ also has $\mathcal{P}$ (as a subgroup of $G_{v_0}$), and every $F_{n-1}(v_i)$ is normal in $N_{n}(v_0)$. Therefore it follows from Lemma \ref{lem-P-stable} that the subgroup $N_{n}(v_0)$ is locally $\mathcal{P}$, and hence is $\mathcal{P}$. 

Moreover it readily follows from the definition of the group $\Pg$ that the image of $\Pg_{v_0}$ in $\mathrm{Sym}(I)$ lies inside the subgroup $\mathrm{Sym}_0(I) \pi (G_{v_0})$ of $\mathrm{Sym}(I)$, where $\mathrm{Sym}_0(I)$ is the group of finitary permutations of $I$. The group $G_{v_0}$ has $\mathcal{P}$, so its image $\pi (G_{v_0})$ also has $\mathcal{P}$. By assumption an extension of a locally finite group by a group with $\mathcal{P}$ remains $\mathcal{P}$, so it follows that $\mathrm{Sym}^0(I) \pi (G_{v_0})$ (and therefore $\pi(K_{n}(v_0))$ has $\mathcal{P}$. 

So we have proved that the group $K_{n}(v_0)$ is an extension of groups with $\mathcal{P}$. By assumption $\mathcal{P}$ is stable under extension, so $K_{n}(v_0)$ must have $\mathcal{P}$. This finishes the induction step and terminates the proof of the first statement.

To obtain the second statement, remark that fixators of edges in $\Pg$ must also have $\mathcal{P}$, and apply Lemma \ref{lem-fixator-amenable}.
\end{proof}

In particular when applying Proposition \ref{prop-pw-amenable} with $\mathcal{P}$ equals amenability, we obtain the following result.

\begin{cor} \label{cor-pg-amenable}
Let $G \leq \aut$ be a subgroup such that fixators of vertices in $G$ are amenable. Then for every $\xi \in \partial T$, the stabilizer of $\xi$ in $\Pg$ is amenable.
\end{cor}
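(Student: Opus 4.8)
The plan is to obtain this as the special case of Proposition \ref{prop-pw-amenable} in which $\mathcal{P}$ is the property of being amenable. First I would check that amenability is indeed one of the admissible properties, i.e.\ that it satisfies the three conditions listed just before Proposition \ref{prop-pw-amenable}: the class of amenable groups is stable under taking subgroups, quotients and extensions; a group is amenable if and only if all its finitely generated subgroups are amenable, so amenability coincides with locally amenable; and every (locally finite)-by-amenable group is amenable, since a locally finite group is amenable and amenability is closed under extensions. These are exactly the properties quoted as examples in the paper, so this verification is immediate.

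With this in place, Proposition \ref{prop-pw-amenable}, applied to the group $G$ whose vertex fixators are amenable by hypothesis, yields that vertex fixators in $\Pg$ are amenable and, more to the point, that the stabilizer of any $\xi \in \partial T$ in $\Pg$ is either amenable or amenable-by-$\mathbf{Z}$. In the second case I would invoke once more that $\mathbf{Z}$ is amenable together with the stability of amenability under extensions to conclude that the stabilizer is amenable. In either case $\Pg_\xi$ is amenable, which is the assertion.

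I do not expect a genuine obstacle here: all of the content sits in Proposition \ref{prop-pw-amenable} (and, through it, in Lemmas \ref{lem-fixator-amenable} and \ref{lem-P-stable}), and what remains is the routine check that amenability is an admissible $\mathcal{P}$ together with a one-line extension argument to absorb the possible $\mathbf{Z}$-quotient. The only mild point of care is to match the terminology: the \enquote{fixators of ends} appearing in the conclusion of Proposition \ref{prop-pw-amenable} are the same objects as the stabilizers $\Pg_\xi$ considered in the corollary, since an end is a single point of $\partial T$ and there is no distinction between fixing it setwise and fixing it pointwise.
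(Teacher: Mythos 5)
Your proposal is correct and is essentially the paper's own argument: the corollary is obtained by specializing Proposition \ref{prop-pw-amenable} to $\mathcal{P}$ being amenability (which satisfies the three stability conditions, as the paper notes), and then absorbing the possible $\mathbf{Z}$-quotient of the end stabilizer using closure of amenability under extensions. No gap; your explicit check of the admissibility of $\mathcal{P}$ and of the amenable-by-$\mathbf{Z}$ step merely spells out what the paper leaves implicit.
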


We now prove Theorem \ref{thm-pw-c*simple} from the introduction.

\begin{proof}[Proof of Theorem \ref{thm-pw-c*simple}]
(a). Let $\Gamma \leq \Pg$ containing $G$ and having non-trivial fixators of half-trees. It is clear that the action of $\Gamma$ on $T$ is minimal and of general type because it is already the case for $G$ and $\Gamma$ contains $G$.

Since fixators of vertices in $G$ are amenable, by Corollary \ref{cor-pg-amenable} we obtain that stabilizers of ends in $\Pg$ are amenable, so a fortiori the same is true in $\Gamma$. Moreover fixators of half-trees in $\Gamma$ are non-trivial by assumption, so Theorem \ref{thm-general-c*-intro} gives the conclusion. 

(b). Remark that if the action of a group on $T$ is minimal and of general type, the non-triviality of the fixator of \textit{one} half-tree is equivalent to the non-triviality of all fixators of half-trees. By combining this observation with statement (a), we see that it is enough to exhibit some non-trivial element in $\Pg$ fixing a half-tree.

Since stabilizers of vertices in $G$ are non-trivial, we may find a vertex $v$ of degree at least three, two different edges $e_1$ and $e_2$ around $v$ and $g \in G_v$ such that $g(e_1)=e_2$. Denote by $T_1$ (resp.\ $T_2$) the half-tree emanating from $e_1$ (resp.\ $e_2$) and not containing $v$, so that $T_1$ and $T_2$ are disjoint and $g(T_1)=T_2$. Now consider $\gamma$ acting like $g$ on $T_1$, like $g^{-1}$ on $T_2$ and being the identity elsewhere. By construction the element $\gamma$ is non-trivial, $\gamma$ fixes a half-tree because the degree of $v$ is at least three, and $\gamma$ clearly belongs to $\Pg$. This terminates the proof.
\end{proof}

We now explain how Theorem \ref{thm-pw-c*simple} allows to construct a multitude of groups with trivial amenable radical that are not $C^\ast$-simple. First start with:
\begin{itemize}
\item an amalgamated product $G = A\ast_C B$, where $A,B$ are countable amenable groups, $C$ is a proper subgroup of both $A$ and $B$ that is not of index two in both $A$ and $B$; or
\item an HNN-extension $G = \mathrm{HNN}(H,K,L,\varphi)$, where $H$ is a countable amenable group, $K,L$ are proper subgroups, and $\varphi: K \rightarrow L$ is an isomorphism;
\end{itemize}
and consider the action of $G$ on its Bass-Serre tree $T$. This action is minimal and of general type, and fixators of vertices are amenable. Therefore if we denote by $\hat{G}$ the image of $G \rightarrow \aut$, Theorem \ref{thm-pw-c*simple} shows that $\mathrm{Pw}(\hat{G})$ has trivial amenable radical and is not $C^\ast$-simple. Note that the kernel of $G \rightarrow \hat{G}$ can be explicitly computed (see for instance \cite[\S 5.1 and 5.2]{dlH-P}).

For example one may take for $A,B$ two finite groups of cardinality $p$ and $q$ and $C=1$, with $(p,q) \neq (2,2)$. In this situation $G = A \ast B$ acts on a $(p,q)$-biregular tree. For $(p,q)=(2,3)$, the group $G$ is isomorphic to $\mathrm{PSL}(2,\mathbf{Z})$, and we obtain that the group of automorphisms of the $(2,3)$-biregular tree which are piecewise $\mathrm{PSL}(2,\mathbf{Z})$ is not $C^\ast$-simple. Interestingly, one may check that this group is isomorphic to $G(F,F')$ with $F = \mathrm{Alt}(3)$ and $F' = \mathrm{Sym}(3)$ (see Section \ref{sec-G(F,F')} for the definition of these groups). 

The case of HNN-extensions includes for example the Baumslag-Solitar group $\mathrm{BS}(m,n) = \left\langle t,x \, | \, tx^{m}t^{-1}  = x^n\right\rangle$ with $|n| > |m| \geq 2$, whose Bass-Serre tree $T$ is regular of degree $|m|+|n|$. While $\mathrm{BS}(m,n)$ is known to be $C^\ast$-simple \cite{dlH-P}, Theorem \ref{thm-pw-c*simple} shows that the group $\mathrm{Pw}(\mathrm{BS}(m,n))$ is \textit{not} $C^\ast$-simple. Note that the recent work \cite{SvR} shows that some Schlichting completion of $\mathrm{BS}(m,n)$, which coincides with the closure of $\mathrm{BS}(m,n)$ in $\aut$, is $C^\ast$-simple. 

\section{Groups with prescribed local action} \label{sec-G(F,F')}

Let $\Omega$ be a set (with no further assumption) of cardinality at least three, and $T_\Omega$ a regular tree of degree the cardinality of $\Omega$. We fix a coloring of the edges $c : E(T_\Omega) \rightarrow \Omega$ such that for every vertex $v$, the restriction of $c$ to the set of edges containing $v$ induces a bijection with $\Omega$. For every $g \in \autd$ and every vertex $v$, the action of $g$ around $v$ gives rise to a permutation $\sg \in \Sy$ after identification of the edges around $v$ and around $g(v)$ with $\Omega$.

Given a permutation group $F \leq \Sy$, we let $U(F) \leq \autd$ be the subgroup consisting of elements $g$ such that $\sg \in F$ for all vertices $v$. It is not hard to check that the action of the group $U(F)$ on $T_\Omega$ is always transitive on vertices and of general type. 

When $\Omega$ is finite, the groups $U(F)$ are important closed subgroups of $\autd$, and play an essential role in the main construction of \cite{BM-IHES-2}. The study of these groups in the setting of a non-locally finite tree has been initiated in \cite{Smith}.

Now given a second permutation group $F' \leq \Sy$ containing $F$, we define $G(F,F') \leq \autd$ as the set of automorphisms $g$ such that $\sg \in F'$ for all $v$, and $\sg \in F$ for all but finitely many $v$. Clearly $G(F,F')$ contains $U(F)$, and it is a simple verification that $G(F,F')$ is always a group.

\begin{rmq}
When $\Omega$ is a finite set, it readily follows from the definition that the group $\mathrm{Pw}(U(F))$ coincides with $G(F,\Sy)$. When $\Omega$ is infinite, we always have $\mathrm{Pw}(U(F)) \leq G(F,\Sy)$, but this inclusion is strict in general, so that the groups investigated in this section are not covered by Section \ref{sec-Pw}.
\end{rmq}

\subsection{Proof of Theorem C}

The aim of this paragraph is to show that, under mild assumptions on the permutation groups $F,F'$, the group $G(F,F')$ is not $C^\ast$-simple.

\begin{lem} \label{lem-fixator-F-freely}
Assume that the permutation group $F$ acts freely on $\Omega$. Then fixators of edges in $U(F)$ are trivial.
\end{lem}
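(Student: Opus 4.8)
The plan is to unwind the definitions. Suppose $g \in U(F)$ fixes an edge $e = \{v, w\}$ pointwise, i.e. $g$ fixes both $v$ and $w$. I want to show $g = \mathrm{id}$, equivalently that $g$ fixes every vertex of $T_\Omega$. Since $T_\Omega$ is connected, it suffices to show by induction on distance from $v$ that $g$ fixes the ball $B(v,n)$ pointwise for all $n$; the base case $n = 0$ (indeed $n=1$) is given.

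For the inductive step, suppose $g$ fixes a vertex $u$. I claim $g$ fixes every neighbour of $u$. Indeed, the local permutation $\sigma(g,u) \in F$ describes how $g$ permutes the edges around $u$: if $u'$ is the neighbour of $u$ along the edge of colour $a \in \Omega$, then $g(u')$ is the neighbour of $g(u) = u$ along the edge of colour $\sigma(g,u) \cdot a$. Because $F$ acts \emph{freely} on $\Omega$, and $\sigma(g,u)$ fixes... wait — I need a fixed colour to invoke freeness. This is where the hypothesis $g \in U(F)$ fixing the \emph{edge} $e$ (not just a vertex) enters: if $u$ is a vertex fixed by $g$ and $u$ has a neighbour $u_0$ also fixed by $g$ (with the edge $\{u,u_0\}$ of some colour $a_0$), then $\sigma(g,u)$ fixes the colour $a_0$, hence by freeness $\sigma(g,u) = \mathrm{id}$, so $g$ fixes \emph{all} neighbours of $u$, and moreover the edges $\{u, u'\}$ are mapped to themselves preserving colour.

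So the right induction is: for every vertex $u$ fixed by $g$ that lies in the edge $e$ or is joined to $e$ by a path of fixed vertices, $g$ fixes all neighbours of $u$ together with the incident edges; propagating outward from the two endpoints $v, w$ of $e$ shows $g$ fixes $B(v,n)$ pointwise for all $n$, hence $g = \mathrm{id}$. Concretely: $g$ fixes $v$ and $w$; by the claim applied at $v$ (using the fixed neighbour $w$), $g$ fixes all neighbours of $v$; then the claim applies at each such neighbour (which now has a fixed neighbour, namely $v$), and one continues. Thus $G_e = \{\mathrm{id}\}$ in $U(F)$.

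I don't anticipate a genuine obstacle here — the statement is essentially a direct consequence of freeness of $F$ combined with the rigidity that an edge-fixing automorphism of a tree, once it is forced to act trivially on the link of each vertex it fixes, must be trivial. The only point requiring a moment's care is making sure the induction is seeded correctly: fixing a single vertex is \emph{not} enough to start (the local permutation at that vertex need not fix any colour), which is exactly why the lemma is about fixators of \emph{edges} rather than of vertices. I would phrase the argument so that this is transparent.
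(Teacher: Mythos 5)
Your proposal is correct and follows essentially the same argument as the paper: at each endpoint of the fixed edge the local permutation fixes the edge's colour, hence is trivial by freeness of $F$, so $g$ fixes the $1$-ball around the edge, and repeating the argument outward (each newly fixed vertex having a fixed neighbour to seed the next step) forces $g$ to be trivial. Your remark about why fixing a single vertex would not suffice to start the induction is exactly the point the lemma's hypothesis is designed to handle.
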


\begin{proof}
Let $e$ be an edge of $T_\Omega$ and let $a \in \Omega$ be its color. Assume that $g \in U(F)$ fixes $e$. If $v$ is one of the two vertices of $e$, then the permutation $\sigma(g,v)$ fixes $a$ because $g$ fixes the edge $e$, and $\sigma(g,v) \in F$ because $g \in U(F)$. By assumption $F$ acts freely on $\Omega$, so we obtain that $\sigma(g,v)$ is trivial. Therefore $g$ fixes the $1$-ball around the edge $e$, and by repeating the argument we immediately obtain that $g$ must be trivial.
\end{proof}

For a subgroup $G \leq \autd$, we will denote by $G^\ast$ the subgroup of $G$ of index at most two preserving the natural bipartition of vertices of $T_\Omega$.

\begin{lem} \label{lem-torsion-free}
Assume that the permutation group $F$ is torsion free. Then the group $U(F)^\ast$ is torsion free.
\end{lem}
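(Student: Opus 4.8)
The plan is to reduce at once to the case of a finite-order element fixing a vertex, and then to use the cocycle formula for the local permutations $\sigma(g,v)$ together with the torsion-freeness of $F$ to propagate triviality outward from that vertex. So suppose $g \in U(F)^\ast$ satisfies $g^n = 1$ for some $n \geq 1$, and let us show $g = 1$. By the classification of tree automorphisms, $g$ is either hyperbolic or elliptic; a hyperbolic element translates along its axis and hence has infinite order, so $g$ is elliptic, meaning it fixes a vertex or inverts an edge. But an element inverting an edge swaps its two endpoints, which lie in different classes of the bipartition, contradicting $g \in U(F)^\ast$. Hence $g$ fixes some vertex $v$.

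Next I would exploit that $g$ fixes $v$. Since the source and target identifications of the edges around $v$ with $\Omega$ both use the fixed coloring $c$, the permutation $\sigma(g,v) \in F$ is exactly the permutation induced by $g$ on the colors of the edges at $v$, so $\sigma(g,v) = 1$ if and only if $g$ fixes every neighbour of $v$. Using the cocycle identity $\sigma(gh,v) = \sigma(g,h(v))\,\sigma(h,v)$ and the fact that $g^k(v) = v$ for all $k$, one gets $\sigma(g^n,v) = \sigma(g,v)^n$, hence $\sigma(g,v)^n = 1$. As $\sigma(g,v) \in F$ and $F$ is torsion free, this forces $\sigma(g,v) = 1$, i.e.\ $g$ fixes the $1$-ball around $v$.

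Finally, I would conclude by induction on the distance to $v$. If $g$ fixes a vertex $w$, then — still with $g^n = 1$ — the same computation at $w$ gives $\sigma(g,w)^n = 1$, hence $\sigma(g,w) = 1$ by torsion-freeness of $F$, so $g$ fixes the $1$-ball around $w$ as well. Since $T_\Omega$ is connected, $g$ fixes every vertex, so $g = 1$, which proves that $U(F)^\ast$ is torsion free. I do not expect a real obstacle here: the only non-elementary ingredient is the fixed-point statement in the first paragraph (a finite-order automorphism of a tree fixes a vertex or inverts an edge), which is classical, and the rest is a direct manipulation of the cocycle $\sigma$, entirely in the spirit of the proof of Lemma \ref{lem-fixator-F-freely}.
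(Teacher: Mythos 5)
Your proof is correct and follows essentially the same route as the paper: rule out hyperbolic elements (and, as you make explicit, edge inversions, which are excluded by type-preservation), then use $\sigma(g^n,v)=\sigma(g,v)^n$ at a fixed vertex and torsion-freeness of $F$ to kill the local permutation, and propagate outward by induction to conclude $g=1$.
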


\begin{proof}
Any element $g \in U(F)^\ast$ is either hyperbolic or fixes a vertex. In the former case $g$ is clearly of infinite order, so we may assume that there exists a vertex $v$ such that $g(v)=v$. Assume that $g^n=1$ for some $n \geq 1$. Since $g$ fixes $v$, one has $\sigma(g,v)^n=\sigma(g^n,v)=1$, and therefore $\sigma(g,v)=1$ since $F$ is torsion free. An easy induction on the distance between a vertex $w$ and $v$ shows that $\sigma(g,w)=1$ for every vertex $w$, and we deduce that $g$ must be trivial. 
\end{proof}

Under the assumption that $F$ acts freely on $\Omega$, the following result relates the amenability of fixators of half-trees in $G(F,F')$ to the amenability of point stabilizers $F_a'$ in $F'$, $a \in \Omega$. We emphasize that, for fixators of half-trees in $G(F,F')$ to the amenable, the permutation group $F'$ (and a fortiori vertex stabilizers in $G(F,F')$) need \textit{not} be amenable (note the difference with Proposition \ref{prop-pw-amenable}). This will be a crucial point in the proof of Theorem \ref{thm-intro-embedding}.

\begin{prop} \label{prop-ht-amenable}
Let $\Omega$ be a set, and $F \leq F' \leq \Sy$ permutation groups such that $F$ acts freely on $\Omega$ and $F'$ preserves the orbits of $F$. Then fixators of half-trees in $G(F,F')$ are amenable if and only if $F_a'$ is amenable for every $a \in \Omega$.
\end{prop}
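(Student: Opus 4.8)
The plan is to realise each half-tree fixator of $G(F,F')$ as a concrete group of rooted-tree automorphisms, and then to analyse it by an inductive extension argument. Fix an edge $e$ with colour $a$ and endpoints $v,w$, and let $T_1$ be the half-tree containing $v$ (so $w\notin T_1$) and $T'$ the half-tree containing $w$. If $g\in G(F,F')$ fixes $T_1$ pointwise, then $g$ fixes $v$ together with all its neighbours other than $w$, so $\sigma(g,v)$ fixes all colours but at most one and is therefore trivial; hence $g$ fixes $e$ and $w$. Thus $g\mapsto g|_{T'}$ identifies $G(F,F')_{T_1}$ with the group $H_a$ of automorphisms $h$ of the rooted tree $(T',w)$ — in which $w$ has the $|\Omega|-1$ children indexed by $\Omega\setminus\{a\}$ — such that $h(w)=w$, $\sigma(h,w)\in F_a'$, $\sigma(h,u)\in F'$ for every vertex $u$, and $\sigma(h,u)\in F$ for all but finitely many $u$ (conversely, any such $h$ extends to $G(F,F')_{T_1}$ by acting trivially on $T_1$). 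Since every colour is the colour of some edge, it suffices to prove that, under the stated hypotheses, $H_a$ is amenable for every $a\in\Omega$ if and only if $F_a'$ is amenable for every $a\in\Omega$.

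For the ``only if'' direction I would note that $\rho\colon H_a\to F_a'$, $h\mapsto\sigma(h,w)$, is a homomorphism, and that it is surjective: given $\pi\in F_a'$, one constructs $h_\pi\in H_a$ by putting $\sigma(h_\pi,w)=\pi$ and propagating outward — at each vertex the local permutation is forced to send the colour of the edge pointing toward $w$ to its image, and since $F'$ preserves the $F$-orbits while $F$ acts freely on $\Omega$, there is a \emph{unique} element of $F$ achieving this, which one takes. Then $h_\pi$ has all its local permutations in $F$ except at $w$, so $h_\pi\in H_a$ and $\rho(h_\pi)=\pi$. Hence $F_a'$ is a quotient of $G(F,F')_{T_1}\cong H_a$, so if all half-tree fixators of $G(F,F')$ are amenable then every $F_a'$ is amenable.

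For the ``if'' direction, assume every $F_a'$ is amenable and fix $a$. Filter $H_a$ by the subgroups $H_a^{(n)}=\{h\in H_a:\sigma(h,u)\in F\text{ whenever }d(w,u)\ge n\}$; this is an increasing chain whose union is $H_a$, because each $h\in H_a$ has only finitely many vertices with $\sigma(h,u)\notin F$ and these lie at bounded distance from $w$. Since amenability passes to directed unions, it is enough to show every $H_a^{(n)}$ is amenable, which I would prove by induction on $n$, uniformly in $a$. For $n=0$, $H_a^{(0)}$ corresponds to the fixator of a half-tree in $U(F)$, hence is contained in an edge fixator of $U(F)$ and is trivial by Lemma~\ref{lem-fixator-F-freely}. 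For $n\ge 1$, the homomorphism $H_a^{(n)}\to F_a'$, $h\mapsto\sigma(h,w)$, has image inside the amenable group $F_a'$; its kernel consists of the $h$ fixing the $1$-ball around $w$, which therefore fix each child $w_b$ of $w$ and preserve the subtree $T_b'$ it roots, restricting there to an element of the analogous group $H_b^{(n-1)}$. Moreover, whenever $T_b'$ contains no vertex with local permutation outside $F$ — which holds for all but finitely many $b$ — freeness of $F$ forces that restriction to be the identity, so the kernel is the \emph{restricted} direct product $\bigoplus_{b\in\Omega\setminus\{a\}}H_b^{(n-1)}$. By induction each $H_b^{(n-1)}$ is amenable, a restricted direct product of amenable groups is amenable, and an extension of an amenable group by an amenable group is amenable; hence $H_a^{(n)}$ is amenable. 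This completes the induction, so every $H_a$ is amenable and every half-tree fixator of $G(F,F')$ is amenable.

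The step I expect to be the crux is the identification of the kernel of $H_a^{(n)}\to F_a'$ as a \emph{restricted}, rather than full, direct product: this is exactly where freeness of $F$ is essential (without it the kernel could be an infinite, possibly non-amenable, product), and it is also what makes the base case and the surjectivity of $\rho$ in the ``only if'' direction work. The hypothesis that $F'$ preserves the $F$-orbits is used precisely in constructing the elements $h_\pi$.
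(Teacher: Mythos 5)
Your proof is correct and takes essentially the same route as the paper: the same filtration of a half-tree fixator by the subgroups of elements whose local permutations lie in $F$ beyond distance $n$ from the root vertex, the same trivial base case via Lemma \ref{lem-fixator-F-freely} (using freeness of $F$), and the same inductive step realizing each term as an extension of a \emph{restricted} direct sum of previously handled groups by the amenable group $F_a'$. The only cosmetic differences are that you obtain the converse by exhibiting $F_a'$ as a quotient of the fixator rather than as a subgroup, and that you reprove directly the surjectivity onto $F_a'$ which the paper cites from \cite[Lemma 3.4]{LB-ae}.
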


\begin{proof}
We assume that $F_a'$ is amenable for every $a \in \Omega$, and we prove that fixators of half-trees in $G(F,F')$ are amenable. 

Given a half-tree $T$ in $T_\Omega$, we denote by $e_{T}$ the edge defining $T$, and by $v_{T}$ the vertex of $e_{T}$ that does not belong to $T$. For every $n \geq 0$, we let $K_{n,T}$ be the set of elements $g \in G(F,F')$ fixing $T$ and such that $\sigma(g,w) \in F$ for every vertex $w$ at distance at least $n$ from $v_T$. For every $b \in \Omega$, we denote by $T_b$ the half-tree containing $v_T$ and defined by the edge around $v_T$ having color $b$.

Let us denote by $a$ the color of the edge $e_T$. The action of the group $K_{n+1,T}$ around the vertex $v_{T}$ yields a morphism from $K_{n+1,T}$ to $F_a'$, and one may check that this morphism is onto thanks to the assumption that $F'$ preserves the orbits of $F$ (see \cite[Lemma 3.4]{LB-ae}). Moreover the kernel of this morphism is the fixator of the $1$-ball around $v_{T}$ in $K_{n+1,T}$, so that we have a short exact sequence \begin{equation} \label{eq-ses} 1 \rightarrow \bigoplus_{b \neq a} K_{n,T_b} \rightarrow K_{n+1,T} \rightarrow F_a' \rightarrow 1 \end{equation} for every $n \geq 0$.

For a fixed half-tree $T$, the sequence $(K_{n,T})$ is increasing and ascends to $G(F,F')_{T}$. So to prove that $G(F,F')_{T}$ is amenable, it is enough to prove that each $K_{n,T}$ is amenable. We shall prove by induction on $n \geq 0$ that $K_{n,T}$ is amenable for every half-tree $T$.

Assume that $n=0$ and let $T$ be a half-tree in $T_\Omega$. By definition the group $K_{0,T}$ lies inside $U(F)$ and fixes an edge. Since $F$ acts freely on $\Omega$, according to Lemma \ref{lem-fixator-F-freely} this implies that $K_{0,T}$ is trivial. So the result holds for $n=0$. 

Now assume that $n \geq 0$ is such that $K_{n,T}$ is amenable for every half-tree $T$. We let $T$ be a half-tree and we show that $K_{n+1,T}$ is amenable. Denote by $a$ the color of $e_T$. Combining the short exact sequence (\ref{eq-ses}) with the assumption on $n$, we see that the group $K_{n+1,T}$ is an extension of an amenable group by the group $F_{a}'$. By assumption all point stabilizers in $F'$ are amenable, so it follows that $K_{n+1,T}$ is an extension of amenable groups, and therefore is amenable. This terminates the proof of one implication.

The converse implication is clear because point stabilizers in $F'$ embed in every fixator of half-tree.
\end{proof}

\begin{cor} \label{cor-stab-xi-g(f,f')}
Let $\Omega$ be a set, and $F \leq F' \leq \Sy$ permutation groups such that $F$ acts freely on $\Omega$ and $F'$ preserves the orbits of $F$. Assume that $F_a'$ is amenable for every $a \in \Omega$. Then for every $\xi \in \partial T_\Omega$, the fixator of $\xi$ in $G(F,F')$ is amenable.
\end{cor}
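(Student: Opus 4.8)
The plan is to apply Lemma \ref{lem-fixator-amenable} to $G=G(F,F')$ with $\mathcal{P}$ the property of being amenable. Amenability is stable under passing to subgroups, it is a local property (so \enquote{locally amenable} coincides with \enquote{amenable}), and an extension of an amenable group by $\mathbf{Z}$ is amenable. Hence, once we know that fixators of edges in $G(F,F')$ are amenable, Lemma \ref{lem-fixator-amenable} tells us that the stabilizer of any $\xi \in \partial T_\Omega$ in $G(F,F')$ is either amenable-by-$\mathbf{Z}$ or amenable, and therefore amenable in all cases. Since for an end $\xi$ the stabilizer and the fixator coincide, this gives the statement.

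So everything reduces to showing that fixators of edges in $G(F,F')$ are amenable, and this is done by the same filtration argument as in Proposition \ref{prop-ht-amenable}. Fix an edge $e$, of color $a$, and for $n \geq 0$ let $K_n$ be the set of elements of $G(F,F')$ fixing $e$ whose local action lies in $F$ at every vertex at distance at least $n$ from $e$. Then $(K_n)$ is an increasing sequence of subgroups whose union is the fixator of $e$, so it suffices to prove that each $K_n$ is amenable. For $n=0$ the group $K_0$ is contained in $U(F)$ and fixes an edge, hence is trivial by Lemma \ref{lem-fixator-F-freely}. For the inductive step, the local action at the two endpoints of $e$ defines a homomorphism from $K_{n+1}$ to $F_a' \times F_a'$ (the image lies in $F_a' \times F_a'$ because an element of $K_{n+1}$ fixes both endpoints of $e$, hence preserves the color $a$), whose kernel consists of the elements fixing the $1$-ball around $e$; this kernel decomposes over the neighbours $z$ of the endpoints of $e$ that are distinct from the opposite endpoint, the $z$-component being a group of the same shape as $K_n$ acting on the branch hanging off $z$.

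The delicate point — and the one I expect to be the main obstacle, as is already implicit in the proof of Proposition \ref{prop-ht-amenable} — is that this kernel is a \emph{restricted} direct sum, not a full direct product. Indeed, if $g$ lies in the kernel then $\sigma(g,z')$ lies in $F$ for all but finitely many vertices $z'$; and since $F$ acts freely on $\Omega$, any element fixing a vertex together with one of its neighbours and having local action in $F$ at that vertex must in fact fix the whole $1$-ball around that vertex, so iterating this shows that the restriction of $g$ to a branch is trivial unless $g$ has an exceptional local action somewhere inside that branch. Consequently all but finitely many of the branch-components of $g$ are trivial, so the kernel is a restricted sum of groups that are amenable by the induction hypothesis, and is therefore itself amenable. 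As $F_a'$ is amenable by hypothesis, $K_{n+1}$ is an extension of an amenable group by an amenable group, hence amenable, and the induction is complete. The crux is thus to get the \enquote{restricted versus full product} point right, which genuinely uses both the defining \enquote{cofinitely in $F$} constraint of $G(F,F')$ and the freeness of the $F$-action; without it the index set of the product would be infinite and amenability would not be automatic.
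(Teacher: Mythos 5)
Your proof is correct, and its overall skeleton is the one the paper uses: reduce the statement to amenability of edge fixators in $G(F,F')$ and then conclude with Lemma \ref{lem-fixator-amenable} (with $\mathcal{P}$ amenability, noting amenable-by-$\mathbf{Z}$ is amenable). Where you diverge is in the middle step. The paper handles edge fixators in one line, by the edge-independence property of $G(F,F')$ --- the fixator of an edge is the product of the fixators of the two half-trees it defines --- and then quotes Proposition \ref{prop-ht-amenable} as a black box. You instead rerun the filtration argument of Proposition \ref{prop-ht-amenable} directly on the edge fixator: map $K_{n+1}$ to $F_a'\times F_a'$ via the local actions at the two endpoints, and decompose the kernel over the branches hanging off the $1$-ball around $e$. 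Both routes are sound, and in substance the verifications coincide: the fact that each branch component of a kernel element again lies in $G(F,F')$ and fixes the complementary half-tree is exactly the splitting mechanism behind edge-independence. Your insistence on the restricted-sum (rather than full product) structure of the kernel is well placed --- this is precisely where freeness of $F$ and the ``cofinitely in $F$'' condition enter, and it is the same point that makes the direct sum appear in (\ref{eq-ses}). Two details to state precisely when writing this up: the induction hypothesis must be quantified over all edges simultaneously (your branch components are groups of the form $K_n$ attached to the edges adjacent to $e$, not to $e$ itself), just as the paper runs its induction over all half-trees at once; and the branch components are in fact subgroups of those $K_n$'s, since they additionally fix a half-tree --- harmless, as subgroups of amenable groups are amenable. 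What the paper's route buys is brevity, at the cost of asserting edge-independence without proof; what yours buys is a self-contained argument that never needs that property explicitly.
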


\begin{proof}
Let $e$ be an edge of $T_\Omega$. It is not hard to check that the fixator of $e$ in $G(F,F')$ is the product of the fixators of the two half-trees defined by $e$ (in other words, the group $G(F,F')$ has the edge-independence property in the terminology of \cite{LB-ae}). According to Proposition \ref{prop-ht-amenable}, these fixators of half-trees are amenable, so the fixator of $e$ in $G(F,F')$ is also amenable. The statement then follows by applying Lemma \ref{lem-fixator-amenable}.
\end{proof}

When the set $\Omega$ is countable and $F$ acts freely on $\Omega$, it immediately follows from Lemma \ref{lem-fixator-F-freely} that the group $U(F)$ is countable. So if moreover the permutation group $F'$ is also countable, it is not hard to deduce that $G(F,F')$ is a countable group.

Before giving the proof of Theorem \ref{thm-c*-g(f,f')}, let us mention that the argument also applies to the group $G(F,F')^\ast$, which is therefore not $C^\ast$-simple either.

\begin{proof}[Proof of Theorem \ref{thm-c*-g(f,f')}]
By definition $G(F,F')$ always contains the group $U(F)$, whose action on $T_\Omega$ is minimal and of general type, so a fortiori the same is true for $G(F,F')$.

Let us prove that fixators of half-trees in $G(F,F')$ are non-trivial. Let $T$ be a half-tree in $T_\Omega$. We point out that $G(F,F')_{T}$ is actually infinite, but this is not needed here, and we shall give a simple argument to exhibit a non-trivial element in $G(F,F')_{T}$. We let $e$ be the edge defining $T$, and denote by $a  \in \Omega$ the color of $e$. We also let $T'$ be the half-tree facing $T$, and $v$ be the vertex of $e$ that belongs to $T'$. Since $F$ is a proper subgroup of $F'$ and $F'$ stabilizes the orbits of $F$, there is a non-trivial $\sigma \in F'$ such that $\sigma(a)=a$. For every $b \in \Omega$, we let $\sigma_b \in F$ such that $\sigma_b(b) = \sigma(b)$. Consider the automorphism $g$ of $T_\Omega$ fixing the edge $e$ and such that:
\begin{itemize}
\item [-] $\sigma(g,w) = id$ for every vertex $w$ in $T$;
\item [-] $\sg = \sigma$;
\item [-] $\sigma(g,w) = \sigma_b$ for every vertex $w \neq v$ of $T'$, where $b$ is the color of the unique edge emanating from $v$ towards $w$.
\end{itemize}
It is a simple verification that the automorphism $g$ is well defined. By construction $g$ fixes $T$ but is not trivial because $\sigma \neq 1$, and $g \in G(F,F')$ because $\sigma(g,w) \in F$ for every $w \neq v$.

Now since point stabilizers in $F'$ are supposed to be amenable, it follows from Corollary \ref{cor-stab-xi-g(f,f')} that stabilizers of ends in $G(F,F')$ are amenable. Therefore we are in position to apply Theorem \ref{thm-general-c*-intro}, which gives the conclusion.
\end{proof}

\subsection{Embedding theorem}

We now prove Theorem \ref{thm-intro-embedding}. Our construction is extremely flexible: it admits a parameter that is a countable amenable group $A$, so that when $A$ varies, we actually obtain many embeddings with the required properties.

We let $\Gamma$ be a countable group. For an arbitrary group $A$, we consider the wreath product $F' = \Gamma \wr A$, and we let $F = \Gamma^{(A)}$ be the kernel of the natural morphism from $F'$ onto $A$. If $\Omega = F' / A$, then the group $F$ acts transitively on $\Omega$. Clearly the stabilizer of the coset $A$ in $F$ is trivial by construction, and since all point stabilizers in $F$ are conjugate, we obtain that $F$ acts freely on $\Omega$. 

We claim that the action of $F'$ on $\Omega$ is always faithful when $\Gamma$ is non-trivial. Indeed, given $1 \neq \gamma \in \Gamma$ and $\alpha \in A$, let us consider the element $f \in F = \Gamma^{(A)}$ equals to the identity everywhere except at $a$, where it takes the value $\gamma$. Now assume that $\beta \in A \cap f A f^{-1}$. Then an easy computation shows that one must have $f = \beta f \beta^{-1}$. Therefore these two elements have the same support, which shows that $\beta$ must be trivial. So there are two conjugates of $A$ in $F'$ that intersect trivially, which implies that the action of $F'$ on $\Omega = F' / A$ is faithful. 

Clearly $F'$ (and therefore $\Gamma)$ embeds into any vertex stabilizer in $G(F,F')$, so in particular into $G(F,F')^\ast$.

If we assume in addition that $A$ is countable, amenable and non-trivial, then $F'$ is also countable, and its point stabilizers are amenable because these are conjugates of $A$. Therefore we may apply Theorem \ref{thm-c*-g(f,f')}, which shows that the countable group $G(F,F')^\ast$ has trivial amenable radical and is not $C^\ast$-simple.

Now if we assume that $\Gamma$ is finitely generated, then $F'$ can easily me made finitely generated as well (by choosing $A$ with the same property). Since $F$ is simply transitive, the group $G(F,F')^\ast$ is generated by two copies of $F'$ (see Corollary 3.10 and Remark 3.11 in \cite{LB-ae}. The argument is given there for $\Omega$ finite, but the proof works verbatim in our setting). In particular this shows that $G(F,F')^\ast$ is finitely generated.

Finally if $\Gamma$ is torsion free, then by choosing $A$ torsion free (e.g.\ $A=\mathbf{Z}$) we see that $F'$ is also torsion free. According to Lemma \ref{lem-torsion-free}, this implies that the group $U(F')^\ast$ has no torsion elements, and therefore the same is true in $G(F,F')^\ast \leq U(F')^\ast$.

\nocite{*}
\bibliographystyle{amsalpha}
\bibliography{c-simple}

\end{document}